\theoremstyle{definition}
\newtheorem{theorem}{Theorem}[section]
\newtheorem{proposition}{Proposition}[section]
\newtheorem{lemma}{Lemma}[section]
\newtheorem{example}{Example}[section]
\newtheorem{remark}{Remark}[section]
\numberwithin{equation}{section}
\begin{document}
\title{Type classification of extreme quantized characters}
\author[R. Sato]{Ryosuke SATO}
\address{Graduate School of Mathematics, Nagoya University, Chikusaku, Nagoya 464-8602, Japan}
\email{d19001r@math.nagoya-u.ac.jp}
\maketitle

\begin{abstract}
The notion of quantized characters is introduced in our previous paper as a natural quantization of characters in the context of asymptotic representation theory for \emph{quantum groups}. As in the case of ordinary groups, the representation associated with any extreme quantized character generates von Neumann factor. In the viewpoint of operator algebras (and measurable dynamical systems), it is natural to ask what is the Murray--von Neumann--Connes type of the resulting factor. In this paper, we give a complete solution to this question when the inductive system is of quantum unitary groups $U_q(N)$.
\end{abstract}

\allowdisplaybreaks{ 
%%%%%%%%%%%%%%%%%%%%%%%%%%%%%%%%%%%%%%%%%%%%%%%%%%%%%%%%%%%%%%%%%%%%%%%%%%%%%%%%%%%%%%%%%%%%%%%%%%%%%%%%%%%%%%%%
\section{Introduction}
%%%%%%%%%%%%%%%%%%%%%%%%%%%%%%%%%%%%%%%%%%%%%%%%%%%%%%%%%%%%%%%%%%%%%%%%%%%%%%%%%%%%%%%%%%%%%%%%%%%%%%%%%%%%%%%%
\subsection{Preface}
Voiculescu \cite{Voiculescu76} initiated the character theory of the infinite dimensional unitary group $U(\infty)=\varinjlim_NU(N)$. In particular, he gave a certain family of extreme characters of $U(\infty)$. Vershik and Kerov \cite{VershikKerov2} and Boyer \cite{Boyer83} independently proved that Voiculescu's family of extreme characters is complete. Moreover, Vershik and Kerov found probabilistic objects corresponding to characters, called central probability measures. Furthermore, central probability measures corresponding to extreme characters are ergodic with respect to certain measurable group action. This idea of Vershik and Kerov has been developed into the so-called asymptotic representation theory.

A quantum analog of Vershik and Kerov's theory was initiated by Gorin \cite{Gorin:2012}. He introduced a natural quantization of central probability measures, called $q$-central probability measures. We \cite{Sato} proposed a notion of \emph{quantized characters}, which is a natural quantization of characters in the context of compact quantum groups and their inductive systems. Moreover, we showed that quantized characters of the inductive system of quantum unitary groups $U_q(N)$ correspond to $q$-central probability measures, and also $q$-central probability measures corresponding to extreme quantized characters are ergodic. 

In the viewpoint of representation theory, the extreme characters of $U(\infty)$ correspond to finite factor representations of $U(\infty)$. Thus, the Murray--von Neumann--Connes type classification of those factor representations should contain some representation theoretic information about extreme characters. Any extreme quantized character of the inductive system of $U_q(N)$ corresponds to a factor representation of an object like a group algebra, which we call the Stratila--Voiculescu AF algebra. Therefore, the type classification of those factor representations indeed gives a fine representation theoretic information of extreme quantized characters. In this paper, we will discover a different phenomenon from the case of $U(\infty)$. See Theorem \ref{theorem:main} and Remark \ref{R1}. Moreover, by the correspondence between quantized characters and $q$-central probability measures, we can regard the type classification in Theorem \ref{theorem:main} as an ergodic theoretic meaning of extreme quantized characters. In fact, Theorem \ref{theorem:main} is proved by computing an orbit equivalence invariant of corresponding ergodic dynamical systems, called Krieger--Araki--Woods ratio sets.

%%%%%%%%%%%%%%%%%%%%%%%%%%%%%%%%%%%%%%%%%%%%%%%%%%%%%%%%%%%%%%%%%%%%%%%%%%%%%%%%%%%%%%%%%%%%%%%%%%%%%%%%%%%%%%%%
\subsection{Main theorems}
We denote by $\mathbb{U}_q$ the inductive system of quantum unitary groups $U_q(N)$, where $q$ is a quantization parameter and always assumed to belong to $(0,1)$ throughout this paper. The quantized characters of $\mathbb{U}_q$ form a certain convex set in the state space of the so-called Stratila--Voiculescu AF-algebra $\mathfrak{A}(\mathbb{U}_q)$. See Section \ref{Section:QC} for more details. We denote by $(T_\chi,\mathcal{H}_\chi)$ the GNS-representation of $\mathfrak{A}(\mathbb{U}_q)$ associated with a quantized character $\chi$. We have shown in our previous paper \cite{Sato} that $\chi$ is extreme if and only if $(T_\chi,\mathcal{H}_\chi)$ is a factor representation, that is, the weak closure $T_\chi(\mathfrak{A}(\mathbb{U}_q\overline{))}^w$ becomes a von Neumann factor, and also the set of all extreme quantized characters of $\mathbb{U}_q$ are parametrized by
\[\mathcal{N}:=\{\theta=(\theta_i)_{i=1}^\infty\in\mathbb{Z}^\infty\mid\theta_1\leq\theta_2\leq\cdots\}.\]
In what follows, we denote by $\chi^\theta$ the quantized character corresponding to $\theta\in\mathcal{N}$. 

Here is the main theorem. See \cite{Takesaki} for \emph{types} of von Neumann factors.
\begin{theorem}\label{theorem:main}
The following three hold true:
\begin{description}
\item[(I${}_1$)${}_\mathrm{OA}$] If $\theta\in\mathcal{N}$ is constant, then $\chi^\theta$ is of type I${}_1$, that is, the von Neumann factor $T_{\chi^\theta}(\mathfrak{A}(\mathbb{U}_q\overline{))}^w$ is of type I${}_1$.
\item[(I${}_\infty$)${}_\mathrm{OA}$] If $\theta\in\mathcal{N}$ is not constant but bounded, then $\chi^\theta$ is of type I${}_\infty$, that is, the von Neumann factor $T_{\chi^\theta}(\mathfrak{A}(\mathbb{U}_q\overline{))}^w$ is of type I${}_\infty$.
\item[(I\hspace{-.1em}I\hspace{-.1em}I${}_{q^2}$)${}_\mathrm{OA}$] If $\theta\in\mathcal{N}$ is unbounded, then $\chi^\theta$ is of type I\hspace{-.1em}I\hspace{-.1em}I${}_{q^2}$, that is, the von Neumann factor $T_{\chi^\theta}(\mathfrak{A}(\mathbb{U}_q\overline{))}^w$ is of type I\hspace{-.1em}I\hspace{-.1em}I${}_{q^2}$.
\end{description}
\end{theorem}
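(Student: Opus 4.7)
The plan is to translate the type classification into an ergodic-theoretic question using the correspondence, established in \cite{Sato}, between extreme quantized characters $\chi^\theta$ of $\mathbb{U}_q$ and ergodic $q$-central probability measures $M^\theta$ on the path space $\mathcal{P}$ of the Gelfand--Tsetlin graph, which serves as the Bratteli diagram for $\mathfrak{A}(\mathbb{U}_q)$. Under this correspondence the GNS factor $T_{\chi^\theta}(\mathfrak{A}(\mathbb{U}_q))^w$ is canonically isomorphic to the Krieger-type von Neumann algebra associated to the tail equivalence relation $\mathcal{R}_{\mathrm{tail}}$ on $(\mathcal{P}, M^\theta)$ together with its Radon--Nikodym cocycle, so by Krieger's theorem the Murray--von Neumann--Connes type is determined by the Krieger--Araki--Woods ratio set $r(\mathcal{R}_{\mathrm{tail}}, M^\theta)$.

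For the two type I cases I expect a direct analysis of $M^\theta$. When $\theta = (c, c, \dots)$ is constant, $M^\theta$ should be the Dirac measure supported on the single deterministic path whose signature at level $N$ is $(c, c, \dots, c)$, so the tail relation on its support is trivial and the factor is $\mathbb{C}$, i.e., type I$_1$. When $\theta$ is bounded but not constant, it must stabilize at some integer $c$ from some level onward; one shows that $M^\theta$ factors as a purely atomic measure on the countably many possible Gelfand--Tsetlin patterns for the non-stabilized initial segment, tensored with a Dirac measure on the deterministic tail. The Radon--Nikodym cocycle is then trivial on the deterministic tail, the atomic diagonal is a separable $\ell^\infty$, and the resulting factor is $B(\ell^2)$, that is, type I$_\infty$.

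For unbounded $\theta$ the task is to show $r(\mathcal{R}_{\mathrm{tail}}, M^\theta) = \{q^{2n} \mid n \in \mathbb{Z}\} \cup \{0\}$. The inclusion $r \subseteq \{q^{2n}\} \cup \{0\}$ I would derive from the explicit formulas (coming from the $q$-analogue of the Weyl dimension/character formulas) for the relative weights of $M^\theta$ on cylinder sets: every Radon--Nikodym value along a tail move is a product of ratios of $q$-integers $[k]_q = (q^k - q^{-k})/(q - q^{-1})$, whose leading $q$-powers telescope into an even power of $q$. For the reverse inclusion, since $r \cap \mathbb{R}_{>0}$ is a closed multiplicative subgroup of $\mathbb{R}_{>0}$, it suffices to exhibit $q^2 \in r$. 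The idea is that unboundedness of $\theta$ ensures that at arbitrarily large levels $N$ there exist tail-equivalent pairs of paths that differ by a single elementary move between two adjacent Gelfand--Tsetlin rows whose Radon--Nikodym derivative is exactly $q^2$, and whose locus has $M^\theta$-measure bounded below by a positive constant; combined with the containment, Krieger's classification then yields type III$_{q^2}$.

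The main obstacle will be the last step: pinpointing an elementary modification of paths whose Radon--Nikodym derivative is precisely $q^2$ and producing enough such modifications to populate any prescribed subset of positive measure. This requires precise asymptotic control of the $q$-central weights on the Gelfand--Tsetlin graph, and it is exactly here that the dichotomy between bounded and unbounded $\theta$ enters: for bounded $\theta$ the cocycle values concentrate at $1$ on the eventual deterministic tail, while unboundedness supplies sufficient room inside the Gelfand--Tsetlin pattern for nontrivial $q^2$ modifications to survive in the infinite-level limit.
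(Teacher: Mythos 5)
Your overall framework coincides with the paper's: Theorem \ref{theorem:main} is reduced to Theorem \ref{theorem:main2} via the Krieger construction applied to the dynamical system $(\Omega,\mathcal{C},P^\theta,\mathfrak{S})$ on the path space of $\mathbb{GT}$, and your treatment of the two bounded cases (a single atom for constant $\theta$, infinitely many atoms otherwise) is essentially Lemma \ref{L5.1} combined with the Weyl dimension formula \eqref{eq:weyl}. One small correction there: the Radon--Nikodym values are not ``products of ratios of $q$-integers whose leading powers telescope''; by the definition $w([\mu,\lambda])=q^{N|\mu|-(N-1)|\lambda|}$ the ratio $w(\gamma_\lambda(\alpha))/w(\alpha)$ for two paths with the same endpoint is an \emph{exact} even power of $q$ (Lemma \ref{lemma:RN}), which is what makes the containment $r(\Omega,\mathcal{C},P^\theta,\mathfrak{S})\subseteq\{q^{2n}\}\cup\{0\}$ immediate.

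The genuine gap is exactly the step you flag as ``the main obstacle'': showing $q^2\in r(\Omega,\mathcal{C},P^\theta,\mathfrak{S})$. Saying that unboundedness ``supplies room for elementary $q^2$-modifications whose locus has measure bounded below'' is the desired conclusion, not an argument, and two nontrivial ingredients are needed to make it one. First, a reduction lemma (Lemma \ref{L4.2}, resting on the martingale approximation Lemma \ref{L2.1}) showing that it suffices to produce, for \emph{every} cylinder set $C_\alpha$ of positive measure, a $\gamma\in[\mathfrak{S}]$ with $\gamma(C_\alpha)\subset C_\alpha$ and $dP\circ\gamma/dP=q^2$ on a subset of measure at least $\beta P(C_\alpha)$ for a uniform $\beta>0$; without this reduction your phrase ``populate any prescribed subset of positive measure'' has no content. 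Second, and this is the real combinatorial heart, one must actually construct such a $\gamma$: the paper does so in Proposition \ref{A1} by choosing $L$ with $\theta_L>\lambda_1$ (the only place unboundedness is used), identifying the paths from $\lambda$ to a fixed $\mu\in\mathrm{Sign}_L$ with Young diagrams in a rectangle, and decomposing them into chains of length $\geq 2$ along which the weight is multiplied by exactly $q^2$ at each step (Lemma \ref{L:A1}). Cycling each chain defines $\gamma$, and $q$-centrality gives $\sum_i P^\theta(C_{(\alpha,\alpha'_i)})=(1+q^{-2}+\cdots+q^{-2(m-1)})P^\theta(C_{(\alpha,\alpha'_m)})>2P^\theta(C_{(\alpha,\alpha'_m)})$, so the set where the derivative fails to equal $q^2$ (the chain tops) has measure less than $\tfrac12 P^\theta(C_\alpha)$, yielding $\beta=\tfrac12$. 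Without this chain decomposition there is no reason a priori that the $q^2$-locus of any single transformation occupies a definite fraction of $C_\alpha$, so your proposal does not yet establish the type III${}_{q^2}$ case.
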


\begin{remark}\label{R1}
It is known that finite factor representations of $U(\infty)$ are of type I${}_1$ or I\hspace{-.1em}I${}_1$ (see \cite{Voiculescu76}). Therefore, the appearance of type I\hspace{-.1em}I\hspace{-.1em}I and non one dimensional type I is completely new.
\end{remark}

%See \cite[Chapters V, X\hspace{-.1em}I\hspace{-.1em}I]{Takesaki} for the types of von Neumann factors. 

It is known that every quantized character $\chi$ of $\mathbb{U}_q$ corresponds to a certain measurable dynamical system $(\Omega,\mathcal{C},P^\chi,\mathfrak{S})$, see Section \ref{Sec3}. By \cite[Theorem 2.1]{Sato}, the von Neumann algebra obtained by the Krieger construction from $(\Omega,\mathcal{C},P^\chi,\mathfrak{S})$ is $*$-isomorphic to $T_\chi(\mathfrak{A}(\mathbb{U}_q\overline{))}^w$. In particular, measurable dynamical systems corresponding to extreme quantized characters are ergodic. We denoted by $(\Omega,\mathcal{C},P^\theta,\mathfrak{S})$ the ergodic measurable dynamical system corresponding to $\theta\in\mathcal{N}$ (and also $\chi^\theta$). Then, by the type classification for the Krieger construction (see e.g. \cite[Chapter X\hspace{-.1em}I\hspace{-.1em}I\hspace{-.1em}I]{Takesaki}), Theorem \ref{theorem:main} is equivalent to the following theorem:

\begin{theorem}\label{theorem:main2}
The following three hold true:
\begin{description}
\item[(I${}_1$)${}_\mathrm{OE}$] If $\theta\in\mathcal{N}$ is constant, then $P^\theta$ is atomic with a single atom.
\item[(I${}_\infty$)${}_\mathrm{OE}$] If $\theta\in\mathcal{N}$ is not constant but bounded, then $P^\theta$ is atomic with infinite atoms.
\item[(I\hspace{-.1em}I\hspace{-.1em}I${}_{q^2}$)${}_\mathrm{OE}$] If $\theta\in\mathcal{N}$ is unbounded, then the dynamical system $(\Omega,\mathcal{C},P^\theta,\mathfrak{S})$ is of type I\hspace{-.1em}I\hspace{-.1em}I${}_{q^2}$, that is, the so-called Krieger--Araki--Woods ratio set becomes $\{q^{2n}\mid n\in\mathbb{Z}\}\cup\{0\}$.
\end{description}
\end{theorem}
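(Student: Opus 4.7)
The plan is to prove Theorem \ref{theorem:main2} directly by analyzing the $q$-central probability measure $P^\theta$ on the path space $\Omega$ of signatures, together with the tail-equivalence group $\mathfrak{S}$ of finitary rearrangements introduced in Section \ref{Sec3}.

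For the atomic cases, the common observation is that bounded $\theta$ forces $P^\theta$ to concentrate on paths whose $N$-th signature $\lambda^{(N)}$ has entries in a bounded set. When $\theta=(m,m,\dots)$ is constant, the corresponding quantized character factors through the one-dimensional $q$-character $\det{}^m$ of each $U_q(N)$, and $P^\theta$ reduces to the Dirac mass at the single $\mathfrak{S}$-fixed path $\lambda^{(N)}=(m,\dots,m)$; this gives (I${}_1$)${}_\mathrm{OE}$. When $\theta$ is bounded but non-constant, the same concentration shows that $P^\theta$ is purely atomic with support a single countably infinite $\mathfrak{S}$-orbit of paths with signatures in the finite range of $\theta$, yielding (I${}_\infty$)${}_\mathrm{OE}$.

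The heart of the argument is the unbounded case. I would first show that $P^\theta$ is non-atomic: unboundedness of $\theta$ forces the signatures to spread out with $N$, so no individual path carries positive measure. Next, I would compute the Radon--Nikodym cocycle of $\mathfrak{S}$ acting on $(\Omega,P^\theta)$. A finitary $g\in\mathfrak{S}$ modifies a path only up to some level $N$, and the explicit formula for the $q$-central weights expresses $(dP^\theta\!\circ\! g/dP^\theta)(\omega)$ as a ratio of $q$-deformed Weyl dimensions at two signatures of the same shape. By the $q$-Weyl character formula, any such ratio equals $q^{2k}$ for some $k\in\mathbb{Z}$, giving the inclusion $r(P^\theta)\subseteq\{q^{2n}\mid n\in\mathbb{Z}\}\cup\{0\}$.

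For the reverse inclusion I would construct, for each $n\in\mathbb{Z}$ and any preassigned set $A$ of positive measure, a partial $\mathfrak{S}$-transformation with domain of positive measure inside $A$ whose image also lies in $A$ and whose Radon--Nikodym derivative equals $q^{2n}$ on the domain. The mechanism is a local rearrangement of two far-apart entries of $\lambda^{(N)}$ at sufficiently deep levels that shifts the $q$-weight exponent by exactly $2n$; unboundedness of $\theta$ is precisely what guarantees such configurations occur in the support with positive probability. Letting $n\to-\infty$ (so $q^{2n}\to 0$) places $0$ in the closure, completing the identification of the ratio set. The principal obstacle is making this constructive step quantitative: one must simultaneously secure a positive-measure domain on which the local move lands back in $A$ and evaluate the resulting $q$-weight ratio exactly, both of which rest on a careful quantitative use of the unboundedness of $\theta$.
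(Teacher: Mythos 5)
Your overall strategy coincides with the paper's: for bounded $\theta$ you locate an atom and count its $\mathfrak{S}$-orbit, and for unbounded $\theta$ you compute the Radon--Nikodym cocycle to obtain $r(\Omega,\mathcal{C},P^\theta,\mathfrak{S})\subseteq\{q^{2n}\mid n\in\mathbb{Z}\}\cup\{0\}$ and then try to realize the remaining values. The two atomic cases and the inclusion direction are essentially the paper's argument (the paper proves the atom exists by showing that once $\theta$ stabilizes at level $N$ the conditional measure on $C_{\alpha^\theta_N}$ forces a unique continuation, and that the orbit is infinite because $\dim(\lambda(n;\theta))\to\infty$; you assert the concentration without this mechanism, but the route is the same). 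One simplification you miss: since $r(\Omega,\mathcal{C},P^\theta,\mathfrak{S})\setminus\{0\}$ is a multiplicative subgroup of $\mathbb{R}_{>0}$, it suffices to show $q^2$ lies in the ratio set; you need not realize each $q^{2n}$ separately, and $0$ then follows from closedness of the ratio set. Also, the non-atomicity of $P^\theta$ in the unbounded case is not needed and the attribution of the cocycle values to the $q$-Weyl character formula is off --- the ratio $w(\gamma_\lambda(\alpha))/w(\alpha)$ lands in $q^{2\mathbb{Z}}$ simply from the form of the weight function on paths sharing an endpoint.

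The genuine gap is in the step you yourself flag as ``the principal obstacle'': producing, inside an arbitrary $A$ of positive measure, a positive-measure domain mapped back into $A$ with derivative exactly $q^2$. A transposition of two paths with a common endpoint has derivative $q^{2k}$ on one cylinder and $q^{-2k}$ on the other, so ``a local rearrangement of two far-apart entries'' does not by itself yield a set of controlled size on which the derivative is $q^2$; and without a lower bound on the fraction of each cylinder so moved that is \emph{uniform over cylinders}, one cannot pass from cylinder sets to general $A$. The paper supplies exactly the three missing ingredients: (i) Lemma \ref{L2.1}, a martingale argument showing every $A$ with $P(A)>0$ is $(1-\epsilon)$-dense in some cylinder $C_\alpha$; (ii) Lemma \ref{L4.2}, reducing $r\in r(\Omega,\mathcal{C},P,\mathfrak{S})$ to finding, for \emph{every} cylinder $C_\alpha$, a $\gamma\in[\mathfrak{S}]$ with $\gamma(C_\alpha)\subset C_\alpha$ and derivative near $r$ on a $\beta$-fraction of $C_\alpha$ with $\beta$ independent of $\alpha$; and (iii) Proposition \ref{A1}, the combinatorial heart: choosing $L$ with $\theta_L>\lambda_1$ (this is precisely where unboundedness enters), the continuations of $\alpha$ to level $L$ are partitioned into chains of length at least $2$ along which the weight is multiplied by exactly $q^2$ at each step, via an identification with Young diagrams in a rectangle; cycling along these chains produces a $\gamma$ whose derivative equals $q^2$ on more than half of $C_\alpha$. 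None of (i)--(iii) appears in your sketch, so the type $\mathrm{III}_{q^2}$ case is not yet proved.
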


Our purpose in this paper is to prove Theorem \ref{theorem:main2} (see Section \ref{sec:proof}).

%%%%%%%%%%%%%%%%%%%%%%%%%%%%%%%%%%%%%%%%%%%%%%%%%%%%%%%%%%%%%%%%%%%%%%%%%%%%%%%%%%%%%%%%%%%%%%%%%%%%%%%%%%%%%%%%
\subsection{Organization}
In Section \ref{Section:QC}, we will briefly recall the notion of quantized characters and quantum unitary groups $U_q(N)$. The corresponding measurable dynamical systems are introduced in Section \ref{Sec3}. In order to prove Theorem \ref{theorem:main2}, we will compute so-celled Krieger--Araki--Woods ratio sets. In Section \ref{sec:ratioset}, we give basic facts on Krieger--Araki--Woods ratio sets. Using these facts (and a technical proposition in Appendix \ref{app1}), we prove Theorem \ref{theorem:main2} in Section \ref{sec:proof}.

%%%%%%%%%%%%%%%%%%%%%%%%%%%%%%%%%%%%%%%%%%%%%%%%%%%%%%%%%%%%%%%%%%%%%%%%%%%%%%%%%%%%%%%%%%%%%%%%%%%%%%%%%%%%%%%%
\section{From representation theory}
\subsection{Quantized characters}\label{Section:QC}
In this section, we brief the notion of quantized characters of compact quantum groups and their inductive systems. See \cite[Section 2]{Sato} for more details. We also refer to \cite[Chapter 1]{NeshveyevTuset} for compact quantum groups. 

Let $G=(A(G),\delta_G)$ be a compact quantum group, that is, $A(G)$ is a unital $C^*$-algebra and $\delta_G\colon A(G)\to A(G)\otimes A(G)$ is a unital $*$-homomorphism such that 
\begin{itemize}
\item (coassociativity) $(\delta_G\otimes\mathrm{id})\circ\delta_G=(\mathrm{id}\otimes\delta_G)\circ\delta_G$ as $*$-homomorphisms from $A(G)$ to $A(G)\otimes A(G)\otimes A(G)$,
\item (cancellation property) $(A(G)\otimes 1)\delta_G(A(G)),\,(1\otimes A(G))\delta_G(A(G))$ are dense in $A(G)\otimes A(G)$,
\end{itemize}
where the symbol $\otimes$ means the minimal tensor products of $C^*$-algebras. We denote by $\mathcal{A}(G)\subset A(G)$ the linear subspace generated by all matrix coefficients of finite dimensional representations of $G$. Then $\mathcal{A}(G)$ becomes a $*$-subalgebra of $A(G)$. In this paper, we always assume that $A(G)$ is the universal $C^*$-algebra generated by $\mathcal{A}(G)$. The linear dual $\mathcal{A}(G)^*$ also becomes a unital $*$-algebra and 
\[\mathcal{A}(G)^*\cong\prod_{\alpha\in\widehat{G}}B(\mathcal{H}_{U_\alpha})\]
as $*$-algebras, where $\widehat{G}$ is the set of all unitarily equivalence classes of irreducible representations of $G$, $U_\alpha$ is a representative of each $\alpha\in\widehat{G}$ and $\mathcal{H}_{U_\alpha}$ is the representation space of $U_\alpha$. In this paper, we assume that $\widehat{G}$ is countable. Here we define the two $*$-subalgebras $C^*(G)$ and $W^*(G)$ of $\mathcal{A}(G)^*$ which are $*$-isomorphic to 
\[c_0\mathchar`-\bigoplus_{\alpha\in\widehat{G}}B(\mathcal{H}_{U_\alpha}):=\left\{(x_\alpha)_{\alpha\in\widehat{G}}\in\prod_{\alpha\in\widehat{G}}B(\mathcal{H}_{U_\alpha})\,\middle|\,\lim_{\alpha\in\widehat{G}}\|x_\alpha\|=0\right\},\]
\[\ell^\infty\mathchar`-\bigoplus_{\alpha\in\widehat{G}}B(\mathcal{H}_{U_\alpha})=\left\{(x_\alpha)_{\alpha\in\widehat{G}}\in\prod_{\alpha\in\widehat{G}}B(\mathcal{H}_{U_\alpha})\,\middle|\,\sup_{\alpha\in\widehat{G}}\|x_\alpha\|<\infty\right\}\]
by the above $*$-isomorphism, respectively. Then $C^*(G)$ becomes $C^*$-algebra, called the \emph{group $C^*$-algebra} of $G$, and $W^*(G)$ becomes von Neumann algebra, called the \emph{group von Neumann algebra} of $G$.

We denote by $\{\tau^G_t\}_{t\in\mathbb{R}}$ the scaling group of $G$, which is a one-parameter automorphism group of $\mathcal{A}(G)$. It is known that the linear duals $\hat\tau^G_t$ (defined by $\hat\tau^G_t(f):=f\circ\tau^G_t$ for every $f\in\mathcal{A}(G)^*$) preserve the both subalgebras $C^*(G)$ and $W^*(G)$. Furthermore, the restriction of $\{\hat\tau^G_t\}_{t\in\mathbb{R}}$ to $C^*(G)$ (resp. $W^*(G)$) is point norm continuous (resp. point $\sigma$-weakly continuous). Then a $\hat\tau^G$-KMS state on $C^*(G)$ with the inverse temperature $-1$ is called a \emph{quantized character} of $G$. 

\medskip

Next, let $\mathbb{G}=(G_N)_{N=0}^\infty$ be a sequence of compact quantum groups such that $G_0$ is the trivial compact quantum group $(\mathbb{C},\mathrm{id}_\mathbb{C})$ and $G_N$ is a quantum subgroup of $G_{N+1}$ for every $N\geq0$. Namely, there exists a surjective $*$-homomorphism $\theta_N\colon A(G_{N+1})\to A(G_N)$ satisfying $\delta_{G_N}\circ\theta_N=(\theta_N\otimes\theta_N)\circ\delta_{G_{N+1}}$. By \cite[Lemma 2.10]{Tomatsu07}, the linear dual of $\theta_N$ induces the unital injective $*$-homomorphism
\[\Theta_N\colon W^*(G_N)\to W^*(G_{N+1}).\]
We denote by $\mathfrak{M}(\mathbb{G})$ the inductive limit $\varinjlim_N(W^*(G_N),\Theta_N)$ in the category of $C^*$-algebras. Since $\Theta_N$ is injective for every $N$, each $W^*(G_N)$ can be faithfully embedded into $\mathfrak{M}(\mathbb{G})$. The \emph{Stratila--Voiculescu AF-algebra} $\mathfrak{A}(\mathbb{G})$ of $\mathbb{G}$ is defined as $C^*$-subalgebra of $\mathfrak{M}(\mathbb{G})$ generated by $C^*(G_N)$ for every $N\geq0$. Remark that $\mathfrak{A}(\mathbb{G})$ is a unital AF-algebra. It is known that $\Theta_N\circ\hat\tau^{G_N}_t=\hat\tau^{G_{N+1}}_t\circ\Theta_N$ for every $N\geq1$ and $t\in\mathbb{R}$. Furthermore, there exists a one-parameter automorphism group $\{\hat\tau^\mathbb{G}_t\}_{t\in\mathbb{R}}$ on $\mathfrak{A}(\mathbb{G})$ such that $\hat\tau^\mathbb{G}_t|_{C^*(G_N)}=\hat\tau^{G_N}_t$ for every $N\geq1$ and $t\in\mathbb{R}$. A $\hat\tau^\mathbb{G}$-KMS state $\chi$ such that $\|\chi|_{C^*}(G_N)\|=1$ for every $N\geq0$ is called a \emph{quantized character} of $\mathbb{G}$.

%Let $\{f^G_z\}_{z\in\mathbb{C}}$ be the so-called Woronowicz characters of $G$, which are linear functionals on $\mathcal{A}(G)$ with several conditions (see \cite[Section 1.7]{NeshveyevTuset}).

%%%%%%%%%%%%%%%%%%%%%%%%%%%%%%%%%%%%%%%%%%%%%%%%%%%%%%%%%%%%%%%%%%%%%%%%%%%%%%%%%%%%%%%%%%%%%%%%%%%%%%%%%%%%%%%%
\subsection{Quantum unitary groups $U_q(N)$}
The quantum unitary group $U_q(N)$ of rank $N$ is a compact quantum group whose unital $C^*$-algebra $A(U_q(N))$ is universally generated by the letters $\mathrm{det}_q^{-1}(N)$ and $u_{ij}(N),$ $i,j=1,\dots,N$ satisfying
\begin{align*}\begin{aligned}
u_{ij}(N)u_{kj}(N)&=qu_{kj}(N)u_{ij}(N),\quad i<k,\\
u_{ij}(N)u_{il}(N)&=qu_{il}(N)u_{ij}(N),\quad j<l,\\
u_{ij}(N)u_{kl}(N)&=u_{kl}(N)u_{ij}(N),\quad i<k,\ j>l,\\
u_{ij}(N)u_{kl}(N)-qu_{il}(N)u_{kj}(N)&=u_{kl}(N)u_{ij}(N)-q^{-1}u_{kj}(N)u_{il}(N),\quad i<k,\ j<l,\\
x_{ij}(N)\mathrm{det}_q^{-1}(N)&=\mathrm{det}_q^{-1}(N)x_{ij}(N),\\
\mathrm{det}_q(N)\mathrm{det}_q^{-1}(N)&=\mathrm{det}_q^{-1}(N)\mathrm{det}_q(N)=1,
\end{aligned}\end{align*}
where $\mathrm{det}_q(N)$ is the so-called \emph{quantum determinant}. See for instance \cite{NoumiYamadaMimachi}. 

Each $U_q(N)$ can be regarded as a quantum subgroup of $U_q(N+1)$ with the surjective unital $*$-homomorphism $\theta_N\colon A(U_q(N+1))\to A(U_q(N))$ defined by 
\begin{align*}
\theta_N(u_{ij}(N+1))&:=\begin{cases}u_{ij}(N) & 1\leq i,j\leq N\\\delta_{i,j}1&\text{otherwise},\end{cases}\\
\theta_N(\mathrm{det}_q^{-1}(N+1))&:=\mathrm{det}_q^{-1}(N).
\end{align*} 
Thus, the inductive system of $U_q(N)$ is well defined and denoted by $\mathbb{U}_q$. Furthermore, by \cite[Theorem A.1]{Sato}, extreme quantized characters of $\mathbb{U}_q$ are completely parametrized by the parameter set $\mathcal{N}$.

%%%%%%%%%%%%%%%%%%%%%%%%%%%%%%%%%%%%%%%%%%%%%%%%%%%%%%%%%%%%%%%%%%%%%%%%%%%%%%%%%%%%%%%%%%%%%%%%%%%%%%%%%%%%%%%%
\section{From measurable dynamical systems}
\subsection{The Gelfand--Tsetlin graph and $q$-central probability measures}\label{Sec3}
Here we will briefly recall the Gelfand--Tsetlin graph $\mathbb{GT}$ and $q$-central probability measures, which are probabilistic objects corresponding to quantized characters of $\mathbb{U}_q$. The Gelfand--Tsetlin graph $\mathbb{GT}$ is defined by the branching rules of $\widehat{U(N)}$ (and $\widehat{U_q(N)}$). In the representation theory of $U(N)$ and $U_q(N)$, the following two facts are well known (see \cite{Zelobenko}, \cite{NoumiYamadaMimachi}):
\begin{itemize}
\item $\widehat{U(N)}$ and $\widehat{U_q(N)}$ are parametrized by the set of signatures given as 
\[\mathrm{Sign}_N:=\{\lambda=(\lambda_i)_{i=1}^N\in\mathbb{Z}^N\mid \lambda_1\geq\lambda_2\geq\cdots\geq\lambda_N\},\]
where we set $\mathrm{Sign}_0:=\{*\}$. We define $|\lambda|:=\lambda_1+\cdots+\lambda_N$.
\item For every $\mu\in\mathrm{Sign}_{N-1}$ and $\lambda\in\mathrm{Sign}_N$ the restriction of the irreducible representation with label $\lambda$ contains the irreducible representation with label $\mu$ if and only if $\lambda_1\geq\mu_1\geq\lambda_2\geq\cdots\geq\mu_{N-1}\geq\lambda_N$. We write $\mu\prec\lambda$ in this case.
\end{itemize} 
We define $E_N:=\{[\mu,\lambda]\mid\mu\in\mathrm{Sign}_{N-1},\,\lambda\in\mathrm{Sign}_N,\,\mu\prec\lambda\}$ and
\[V:=\bigsqcup_{N\geq0}\mathrm{Sign}_N,\quad E:=\bigsqcup_{N\geq1}E_N.\]
Moreover, $s, r\colon E\to V$ are defined as the projections onto the first and the second components, respectively. Then the oriented (graded) graph $(V,E,s,r)$ is called the \emph{Gelfand--Tsetlin graph} and denoted by $\mathbb{GT}$. A sequence of $(e_n)$ of edges is called a path (on $\mathbb{GT}$) if $r(e_n)=s(e_{n+1})$ for every $n$. If a path $(e_n)_{n=1}^N$ is a finite (i.e., $N<\infty$), we say that $(e_n)_{n=1}^N$ is a path from $s(e_1)$ to $r(e_N)$. For every $\lambda\in V$ the number of paths from $*$ to $\lambda$ is denoted by $\dim(\lambda)$. By the definition of $\mathbb{GT}$ and the above-mentioned facts about the representation theory of $U(N)$, we have
\begin{equation}\label{eq:weyl}
\dim(\lambda)=\prod_{1\leq i<j\leq N}\frac{(\lambda_i-i)-(\lambda_j-j)}{j-i},
\end{equation}
where the right-hand side coincides with the dimension of the irreducible representation with label $\lambda\in\mathrm{Sign}_N$ by the Weyl dimension formula.

Let $q\in(0,1)$ be a quantization parameter. Then we define the \emph{weight function} $w\colon E\to\mathbb{R}_{>0}$ by
\[w([\mu,\lambda]):=q^{N|\mu|-(N-1)|\lambda|},\quad[\mu,\lambda]\in E_N.\]
This definition is motivated by the quantum dimensions of irreducible representations of $U_q(N)$. See \cite{Sato} for more details. For every finite path $\alpha=(e_n)_{n=1}^N$ we define $w(\alpha):=w(e_1)w(e_{2})\cdots w(e_N)$. Furthermore, for every $\mu\in\mathrm{Sign}_K$ and $\lambda\in\mathrm{Sign}_N$ with $K<N$ we also define
\[\dim_q(\mu,\lambda):=\sum_{\alpha}w(\alpha),\quad \dim_q(\lambda):=\dim_q(*,\lambda),\]
where $\alpha$ runs through the set of finite path from $\mu$ to $\lambda$. Remark that $\dim_q(\lambda)$ coincides with the so-called quantum dimension of irreducible representation of $U_q(N)$ with label $\lambda$ (see \cite{Sato}, \cite{NoumiYamadaMimachi}).

We denote by $\Omega$ the set of all infinite paths starting from $*$ on $\mathbb{GT}$. For any finite path $\alpha=(e_n)_{n=1}^N$ from $*\in V$, the cylinder set $C_\alpha$ associated with $\alpha$ is defined as 
\[C_\alpha:=\{\omega=(\omega_n)_{n=1}^\infty\in\Omega\mid\omega_n=e_n,n=1,\dots,N\}.\]
We denote by $\mathcal{C}$ the $\sigma$-algebra generated by all cylinder sets. Then a probability measure $P$ on $(\Omega,\mathcal{C})$ is called \emph{$q$-central} if 
\[\frac{P(C_\alpha)}{w(\alpha)}=\frac{P(\{(\omega_n)_{n=1}^\infty\in\Omega\mid r(\omega_N)=\lambda\})}{\dim_q(\lambda)}\]
for every $\lambda\in \mathrm{Sign}_N\subset V$ and finite path $\alpha$ from $*$ to $\lambda$. 

In the rest of this section, we introduce a measurable group action on $(\Omega,\mathcal{C})$. For every $N\geq1$ and $\lambda\in\mathrm{Sign}_N$ we denote by $\mathfrak{S}_\lambda$ the permutation group of the finite paths from $*$ to $\lambda$. Then $\mathfrak{S}_\lambda$ is naturally embedded into the group of measurable transformations on $(\Omega,\mathcal{C})$. Indeed, for every $\gamma_0\in\mathfrak{S}_\lambda$ we define the measurable transformation $\gamma$ on $(\Omega,\mathcal{C})$ by 
\[\gamma(\omega):=\begin{cases}(\gamma_0(\omega_1,\dots,\omega_N),\omega_{N+1},\dots)&r(\omega_N)=\lambda,\\\omega&\text{otherwise}.\end{cases}\]
Then we denote by $\mathfrak{S}_N$ the transformation group generated by the images of $\mathfrak{S}_\lambda$ for every $\lambda\in\mathrm{Sign}_N$. Clearly, this group $\mathfrak{S}_N$ is isomorphic to $\bigoplus_{\lambda\in\mathrm{Sign}_N}\mathfrak{S}_\lambda(\Omega)$ as abstract groups. Moreover, $\mathfrak{S}_N$ is a subgroup of $\mathfrak{S}_{N+1}$. Hence, we obtain the measurable transformation group $\mathfrak{S}:=\bigcup_{N=1}^\infty\mathfrak{S}_N$ on $(\Omega,\mathcal{C})$.

\begin{remark}
By \cite[Theorem 3.1]{Sato}, there exists a one-to-one correspondence between the $q$-central probability measures and quantized characters of $\mathbb{U}_q$. Furthermore, by \cite[Theorem 2.1]{Sato}, the von Neumann algebra obtained by the Krieger construction from $(\Omega,\mathcal{C},P,\mathfrak{S})$ and $T_\chi(\mathfrak{A}(\mathbb{U}_q\overline{))}^w$ are $*$-isomorphic if $\chi$ and $P$ are corresponding.
\end{remark}

In what follows, we denote by $P^\theta$ the $q$-central probability measure corresponding to $\theta\in\mathcal{N}$.

\begin{remark}
Vershik--Kerov's central probability measures coincide with $\mathfrak{S}$-invariant measures. Thus, by the Choquet theory (see e.g., \cite{Phelps}), we have that the extreme central probability measures coincide with the $\mathfrak{S}$-ergodic invariant probability measures. On the other hand, our $q$-central probability measures are $\mathfrak{S}$-quasi-invariant. However, by \cite[Theorems 2.1, 2.2]{Sato}, the convex set of $q$-central probability measures becomes a Choquet simplex, and extremeity and $\mathfrak{S}$-ergodicity of $q$-central probability measures are equivalent.
\end{remark}

\begin{remark}
By \cite[Theorem 5.1]{Gorin:2012}, the correspondence between the simplex of extreme (i.e. $\mathfrak{S}$-ergodic) $q$-central probability measures and the parameter set $\mathcal{N}$ is given by 
\begin{equation}\label{eq:parameter}
\frac{P^\theta(C_\alpha)}{\dim_q(\lambda)}=\lim_{n\to\infty,n>N}\frac{\dim_q(\lambda,\lambda(n;\theta))}{\dim_q(\lambda(n;\theta))},
\end{equation}
where $\alpha$ is finite path from $*$ to $\lambda\in\mathrm{Sign}_N$ and $\lambda(n;\theta):=(\theta_n,\theta_{n-1},\dots,\theta_1)\in\mathrm{Sign}_n$. 
\end{remark}

%%%%%%%%%%%%%%%%%%%%%%%%%%%%%%%%%%%%%%%%%%%%%%%%%%%%%%%%%%%%%%%%%%%%%%%%%%%%%%%%%%%%%%%%%%%%%%%%%%%%%%%%%%%%%%%%
\subsection{Krieger--Araki--Woods ratio sets}\label{sec:ratioset}
Here we will collect necessary results on Krieger--Araki--Woods ratio sets. We fix a measurable dynamical system $(\Omega,\mathcal{C},P,\mathfrak{S})$, where we use the same symbols $\Omega,\mathcal{C}$ and $\mathfrak{S}$ as in Section \ref{Sec3} and assume that $P$ is an $\mathfrak{S}$-quasi-invariant probability measure. By definition, $q$-central probability measures are $\mathfrak{S}$-quasi-invariant. Recall that the full group $[\mathfrak{S}]$ is defined as the all measurable transformations $\gamma$ on $(\Omega,\mathcal{C})$ such that for every $\omega\in\Omega$ there exists a $\gamma_0\in\mathfrak{S}$ such that $\gamma(\omega)=\gamma_0(\omega)$. Here we define the \emph{Krieger--Araki--Woods ratio set} $r(\Omega,\mathcal{C},P,\mathfrak{S})$ of the measurable dynamical system $(\Omega,\mathcal{C},P,\mathfrak{S})$. Let $r\in[0,\infty)$. We say that $r\in r(\Omega,\mathcal{F},P,\mathfrak{S})$ if and only if for every $\epsilon>0$ and every $A\in\mathcal{C}$ with $P(A)>0$, there exists $B\in\mathcal{C}$ and $\gamma\in[\mathfrak{S}]$ such that 
\[P(B)>0,\quad B\subseteq A,\quad \gamma(B)\subseteq A,\quad \left|\frac{dP\circ\gamma}{dP}(\omega)-r\right|<\epsilon\]
for almost every $\omega\in B$.

The following lemma seems to be well known (see e.g., \cite[Lemma 12]{Kosloff}), but we give the proof for the reader's convenience.
\begin{lemma}\label{L2.1}
For every $\epsilon>0$ and every $A\in\mathcal{C}$ with $P(A)>0$, there exists a cylinder set $C_\alpha$ such that $P(C_\alpha)>0$ and 
\[\frac{P(A\cap C_\alpha)}{P(C_\alpha)}>1-\epsilon.\]
\end{lemma}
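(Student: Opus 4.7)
The statement is the standard ``density point'' property for an algebra that generates the $\sigma$-algebra. The plan is to show that cylinder sets of any fixed level $N$ partition (the interesting part of) $\Omega$, approximate $A$ from this algebra, and then extract a single cylinder with high $A$-density by a pigeonhole argument weighted by $P$.

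\textbf{Step 1: The algebra of cylinders.} Any finite path of length $N$ from $*$ determines a unique cylinder $C_\alpha$, and the cylinders $\{C_\alpha : \alpha \text{ a path of length } N\}$ form a (countable) measurable partition of $\Omega$. Consequently, finite unions of cylinders form an algebra $\mathcal{A}_0 := \bigcup_N \mathcal{A}_N$, where $\mathcal{A}_N$ is the algebra generated by level-$N$ cylinders, and $\mathcal{A}_N \subseteq \mathcal{A}_{N+1}$ since each level-$N$ cylinder is the disjoint union of the level-$(N+1)$ cylinders extending it. By definition $\mathcal{C} = \sigma(\mathcal{A}_0)$.

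\textbf{Step 2: Approximation.} By the standard approximation theorem for finite measures, for any $\delta > 0$ there exist $N$ and a set $B \in \mathcal{A}_N$ with $P(A \triangle B) < \delta$. Write $B = \bigsqcup_{i=1}^{m} C_{\alpha_i}$ as a disjoint union of level-$N$ cylinders (discarding any with $P(C_{\alpha_i})=0$). Then
\begin{equation*}
\sum_{i=1}^{m} P(A \cap C_{\alpha_i}) = P(A \cap B) \geq P(A) - \delta, \qquad \sum_{i=1}^{m} P(C_{\alpha_i}) = P(B) \leq P(A) + \delta.
\end{equation*}

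\textbf{Step 3: Pigeonhole.} Choose $\delta < \tfrac{\epsilon}{2-\epsilon} P(A)$, which is possible since $P(A)>0$. Suppose, aiming for a contradiction, that $P(A \cap C_{\alpha_i}) \leq (1-\epsilon)\,P(C_{\alpha_i})$ for every $i$. Summing and combining the two inequalities above,
\begin{equation*}
P(A) - \delta \;\leq\; \sum_{i=1}^{m} P(A \cap C_{\alpha_i}) \;\leq\; (1-\epsilon)\sum_{i=1}^{m} P(C_{\alpha_i}) \;\leq\; (1-\epsilon)\bigl(P(A) + \delta\bigr),
\end{equation*}
which forces $\epsilon\,P(A) \leq (2-\epsilon)\,\delta$, contradicting our choice of $\delta$. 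Therefore some $C_{\alpha_i}$ (necessarily with $P(C_{\alpha_i})>0$) satisfies $P(A \cap C_{\alpha_i})/P(C_{\alpha_i}) > 1 - \epsilon$, as required.

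\textbf{Main obstacle.} There is no real obstacle; this is purely measure-theoretic and independent of the $q$-central or dynamical structure. The only point worth a small amount of care is that we must work with cylinders at a \emph{common} level $N$ so that they are pairwise disjoint; this is why we pass to $\mathcal{A}_N$ and use the partition property rather than an arbitrary collection of cylinders.
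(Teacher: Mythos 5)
Your proof is correct, but it takes a genuinely different route from the paper's. The paper gets the approximation step from $L^1$ martingale convergence: it forms the increasing filtration $\mathcal{C}_N$ generated by the level-$N$ cylinders, invokes the convergence $E[1_A\mid\mathcal{C}_N]\to 1_A$ in $L^1$ (L\'evy's upward theorem, cited from Durrett), and then extracts a good cylinder from the explicit formula $E[1_A\mid\mathcal{C}_N]=\sum_\alpha \frac{P(A\cap C_\alpha)}{P(C_\alpha)}1_{C_\alpha}$ by a weighted-average argument applied to $\int_A|1-E[1_A\mid\mathcal{C}_N]|\,dP<\epsilon' P(A)$. You instead use the elementary approximation theorem for the generating algebra of cylinder unions and finish with a pigeonhole over the summed inequalities; this avoids conditional expectations and martingale theory entirely and is, if anything, more self-contained, at the price of the small bookkeeping with $\delta<\frac{\epsilon}{2-\epsilon}P(A)$. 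Both arguments ultimately rest on the same fact (density of the cylinder algebra in $(\mathcal{C},P)$) and both end with an averaging step over the countable level-$N$ partition. One micro-correction to your Step 2: the algebra generated by a countable partition also contains complements of finite unions, i.e.\ countably infinite disjoint unions of cylinders, so the $B$ produced by the approximation theorem need not be a finite union; either allow $m=\infty$ (all your sums still converge absolutely, and the pigeonhole is unaffected) or replace $B$ by a finite sub-union that still approximates $A$ to within $\delta$.
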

\begin{proof}
Let $0<\epsilon'<\epsilon$. For every $N\geq1$ we denote by $\mathcal{C}_N$ the $\sigma$-subalgebra of $\mathcal{C}$ generated by the all cylinder set associated with finite paths form $*$ to vertices in $\mathrm{Sign}_N$. Then $\mathcal{C}_N$ is increasing and $\mathcal{C}=\bigcup_{N\geq1}\mathcal{C}_N$. Thus, by \cite[Theorem 5.5.7]{Durrett}, we have
\[E[1_A\mid\mathcal{C}_N]\to 1_A\quad\text{as }N\to\infty\text{ in }L^1\text{-norm},\]
where $1_A$ is the characteristic function of $A$. Thus, by $P(A)>0$, there exists $N\geq1$ such that 
\[\int_A|1-E[1_A\mid\mathcal{C}_N](\omega)|dP(\omega)<\epsilon'P(A).\]
Since $\Omega=\bigsqcup_\alpha C_\alpha$ (where $\alpha$ runs through all finite paths from $*$ to vertices in $\mathrm{Sign}_N$) is a countable partition of $\Omega$, we have 
\[E[1_A\mid\mathcal{C}_N]=\sum_{\alpha}\frac{P(A\cap C_\alpha)}{P(C_\alpha)}1_{C_\alpha}.\] 
Therefore, %if for above every $\alpha$ 
%\[\left|1-\frac{P(A\cap C_\alpha)}{P(C_\alpha)}\right|>\epsilon',\]
%then we have 
%\[\int_A|1-E[1_A\mid\mathcal{C}_N](\omega)|P(d\omega)>\epsilon'P(A),\]
%but this is in contradiction to the above inequality. Thus, 
there exists a finite path $\alpha$ from $*$ to vertices in $\mathrm{Sign}_N$ such that 
\[P(A\cap C_\alpha)>0,\quad\left|1-\frac{P(A\cap C_\alpha)}{P(C_\alpha)}\right|\leq\epsilon'<\epsilon.\]
In particular, we have 
\[\frac{P(A\cap C_\alpha)}{P(C_\alpha)}>1-\epsilon.\]
\end{proof}

The following lemma is known for special dynamical systems on infinite product spaces (see e.g., \cite{BrownDooleyLake}, \cite{Yoshida}, \cite{Kosloff}).
\begin{lemma}\label{L4.2}
Let $r\in(0,1)$. Suppose that for every $\epsilon>0$ there exists $\beta>0$ such that  for arbitraty finite path $\alpha$ from $*$ with $P(C_\alpha)>0$ there exists $\gamma\in[\mathfrak{S}]$ satisfying $\gamma(C_\alpha)\subset C_\alpha$ and
\[P\left(\left\{\omega\in C_\alpha\,\middle|\,\left|\frac{dP\circ\gamma}{dP}(\omega)-r\right|<\epsilon\right\}\right)>\beta P(C_\alpha).\]
Then $r\in r(\Omega,\mathcal{C},P,\mathfrak{S})$.
\end{lemma}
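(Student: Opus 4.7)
My plan is to reduce the verification of the ratio--set condition for an arbitrary $A\in\mathcal{C}$ to the cylinder-set hypothesis, by using Lemma \ref{L2.1} to approximate $A$ from within by a cylinder and then pulling the hypothesis back through the transformation it produces. Lemma \ref{L2.1} already says that every $A$ with $P(A)>0$ fills almost all of some cylinder $C_\alpha$, so the rough idea is to apply the hypothesis on $C_\alpha$ and take $B$ to consist of those $\omega\in C_\alpha$ for which both $\omega$ and $\gamma(\omega)$ lie in $A$ and the Radon--Nikodym derivative is close to $r$.

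More precisely, given $\epsilon>0$ (which I may assume is $<r$) and $A\in\mathcal{C}$ with $P(A)>0$, I first apply the hypothesis with this $\epsilon$ to get a constant $\beta>0$. I then choose $\epsilon'>0$ small enough that $\epsilon'(1+1/(r-\epsilon))<\beta$, and by Lemma \ref{L2.1} pick a cylinder $C_\alpha$ with $P(C_\alpha)>0$ and $P(A^c\cap C_\alpha)<\epsilon'P(C_\alpha)$. The hypothesis supplies $\gamma\in[\mathfrak{S}]$ with $\gamma(C_\alpha)\subseteq C_\alpha$ such that
$$E:=\bigl\{\omega\in C_\alpha\,\bigm|\,\bigl|\tfrac{dP\circ\gamma}{dP}(\omega)-r\bigr|<\epsilon\bigr\}$$
satisfies $P(E)>\beta P(C_\alpha)$. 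Setting $B:=E\cap A\cap\gamma^{-1}(A)$, the inclusions $B\subseteq A$ and $\gamma(B)\subseteq A$ and the Radon--Nikodym bound on $B$ are immediate; the only nontrivial issue is $P(B)>0$.

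For that, the key ingredient is a transport estimate. The set $F:=E\cap\gamma^{-1}(A^c)$ satisfies $\gamma(F)\subseteq A^c\cap\gamma(C_\alpha)\subseteq A^c\cap C_\alpha$, so $P(\gamma(F))<\epsilon'P(C_\alpha)$, while on $F\subseteq E$ one has $dP\circ\gamma/dP>r-\epsilon$, hence
$$\epsilon'P(C_\alpha)\;>\;P(\gamma(F))\;=\;\int_{F}\tfrac{dP\circ\gamma}{dP}\,dP\;\geq\;(r-\epsilon)\,P(F).$$
Combining this with $P(E\cap A^c)\leq P(A^c\cap C_\alpha)<\epsilon'P(C_\alpha)$ gives
$$P(B)\;\geq\;P(E)-P(E\cap A^c)-P(F)\;>\;\Bigl(\beta-\epsilon'-\tfrac{\epsilon'}{r-\epsilon}\Bigr)P(C_\alpha)\;>\;0$$
by the choice of $\epsilon'$, so $r\in r(\Omega,\mathcal{C},P,\mathfrak{S})$.

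I expect the main obstacle to be precisely the control of the second term $\gamma^{-1}(A^c)\cap E$: Lemma \ref{L2.1} controls $A\cap C_\alpha$ directly but says nothing about $\gamma^{-1}(A)\cap C_\alpha$. The trick is that the lower bound $r-\epsilon>0$ on $dP\circ\gamma/dP$ on $E$ lets one transport the smallness of $A^c\cap C_\alpha$ back through $\gamma$, which is exactly what the displayed transport inequality does. Once that point is understood, the remainder is routine measure-theoretic bookkeeping combined with the calibration of $\epsilon'$ in terms of $\beta,r,\epsilon$.
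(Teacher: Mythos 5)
Your proof is correct and follows essentially the same route as the paper's: approximate $A$ from inside by a cylinder via Lemma \ref{L2.1}, apply the hypothesis on that cylinder, and use the lower bound $dP\circ\gamma/dP>r-\epsilon$ on $E$ to transport the smallness of $A^c\cap C_\alpha$ back through $\gamma$ and conclude $P(E\cap A\cap\gamma^{-1}(A))>0$. The paper performs the identical estimate on the image side, bounding $P(A\cap\gamma(E_\gamma\cap A))$ and calibrating Lemma \ref{L2.1} with the threshold $\max\{1-\beta/2,\,1-\tfrac{\beta}{2}(r-\epsilon)\}$ instead of your explicit $\epsilon'$, which is only a cosmetic difference.
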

\begin{proof}
We fix every $\epsilon>0$ (assume that $\epsilon<r$) and $A\in\mathcal{C}$ with $P(A)>0$. Take $\beta>0$ according to the assumption. By Lemma \ref{L2.1}, there exists a finite path $\alpha$ from $*$ such that $P(A\cap C_\alpha)>0$ and  
\[P(A\cap C_\alpha)>\max\left\{\left(1-\frac{\beta}{2}\right),\left(1-\frac{\beta}{2}(r-\epsilon)\right)\right\}P(C_\alpha).\]
Then there exists $\gamma\in[\mathfrak{S}]$ such that $\gamma(C_\alpha)\subset C_\alpha$ and
\[P(E_\gamma)>\beta P(C_\alpha),\]
where
\[E_\gamma:=\left\{\omega\in C_\alpha\,\middle|\,\left|\frac{dP\circ\gamma}{dP}(\omega)-r\right|<\epsilon\right\}.\]

In order to prove this lemma, it suffices to show that $P(\gamma^{-1}(A)\cap A\cap E_\gamma)>0$. This is equivalent to $P(A\cap \gamma(E_\gamma\cap A))>0$ because $P$ is $\mathfrak{S}$-quasi-invariant. Since $E_\gamma\cap A\subset C_\alpha$ and $\gamma(C_\alpha)\subset C_\alpha$, we have $\gamma(E_\gamma\cap A)\subset C_\alpha$. Thus, we have
\begin{align*}
P(A\cap\gamma(E_\gamma\cap A))
&=P(A\cap C_\alpha)-P(A\cap C_\alpha\cap \gamma(E_\gamma\cap A)^c)\\
&>(1-\frac{\beta}{2}(r-\epsilon))P(C_\alpha)-P(C_\alpha\cap\gamma(E_\gamma\cap A)^c)\\
&=P(\gamma(E_\gamma\cap A))-\frac{\beta}{2}(r-\epsilon)P(C_\alpha).
\end{align*}
Furthermore, we have
\[P(\gamma(E_\gamma\cap A))=\int_{E_\gamma\cap A}\frac{dP\circ\gamma}{dP}(\omega)dP(\omega)>(r-\epsilon)P(E_\gamma\cap A)\]
and, by $P(E_\gamma)>\beta P(C_\alpha)$ and $E_\gamma\subset C_\alpha$,
\begin{align*}
P(E_\gamma\cap A)
&=P(C_\alpha\cap A)-P(C_\alpha\cap E_\gamma^c\cap A)\\
%&>(1-\frac{\beta}{2})P(C_\alpha)-P(C_\alpha\cap E_\gamma^c)\\
&>(1-\frac{\beta}{2})P(C_\alpha)-(1-\beta)P(C_\alpha)=\frac{\beta}{2}P(C_\alpha).
\end{align*}
Therefore, we conclude $P(A\cap\gamma(E_\gamma\cap A))>0$.
\end{proof}

%%%%%%%%%%%%%%%%%%%%%%%%%%%%%%%%%%%%%%%%%%%%%%%%%%%%%%%%%%%%%%%%%%%%%%%%%%%%%%%%%%%%%%%%%%%%%%%%%%%%%%%%%%%%%%%%
\section{Proof of Theorem \ref{theorem:main2}}\label{sec:proof}
In this section, we will prove Theorem \ref{theorem:main2}. Let $P$ be a $q$-central probability measure. Since $P$ is $\mathfrak{S}$-quasi-invariant, for every $\gamma\in\mathfrak{S}$ the Radon--Nikodym derivative $dP\circ\gamma/dP$ exists. Here we give its explicit formula.
\begin{lemma}\label{lemma:RN}
For every $\gamma\in\mathfrak{S}_N\subset\mathfrak{S}$ and $\lambda\in\mathrm{Sign}_N$ we define the permutation $\gamma_\lambda$ on the set of all finite paths from $*$ to $\lambda$ by
\[\gamma((\omega_n)_{n=1}^\infty)=(\gamma_\lambda((\omega_n)_{n=1}^N),\omega_{N+1},\dots),\]
for $(\omega_n)_{n=1}^\infty\in\Omega$ with $r(\omega_N)=\lambda$. Then the Radon--Nikodym derivative $dP\circ \gamma/dP$ is given as 
\begin{equation}\label{eq:RN}
\frac{dP\circ \gamma}{dP}=\sum_{\lambda\in\mathrm{Sign}_N}\sum_{\alpha}\frac{w(\gamma_\lambda(\alpha))}{w(\alpha)}1_{C_\alpha},
\end{equation}
where $\alpha$ runs through the set of finite paths from $*$ to $\lambda$. In particular, $r(\Omega,\mathcal{C},P,\mathfrak{S})\subseteq\{q^{2n}\mid n\in\mathbb{Z}\}\cup\{0\}$.
\end{lemma}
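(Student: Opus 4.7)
My plan is to determine $dP\circ\gamma/dP$ by computing it on cylinder sets and then propagating to all of $\mathcal{C}$. The key observation is that $\gamma\in\mathfrak{S}_N$ permutes only the first $N$ edges of a path: if $\beta$ is a finite path from $*$ to some vertex in $\mathrm{Sign}_M$ with $M\geq N$ and its initial segment $\alpha:=(\beta_1,\dots,\beta_N)$ ends at $\lambda\in\mathrm{Sign}_N$, then $\gamma(C_\beta)=C_{\beta'}$ where $\beta'$ is obtained from $\beta$ by replacing $\alpha$ with $\gamma_\lambda(\alpha)$; in particular $\beta'$ ends at the same vertex as $\beta$. Combined with the $q$-centrality of $P$ applied at depth $M$, this yields
\[\frac{P(\gamma(C_\beta))}{P(C_\beta)}=\frac{w(\beta')}{w(\beta)}=\frac{w(\gamma_\lambda(\alpha))}{w(\alpha)},\]
since the factors corresponding to the edges after depth $N$ cancel. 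This is exactly the value of the right-hand side of \eqref{eq:RN} on $C_\beta$. Since the cylinders form a $\pi$-system generating $\mathcal{C}$ and the proposed function is $\mathcal{C}_N$-measurable and in $L^1(P)$ (a short computation using $q$-centrality and the fact that $\gamma_\lambda$ is a permutation shows its integral equals $1$), a monotone class argument then identifies it as $dP\circ\gamma/dP$.

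For the containment of the ratio set, I would first rewrite each ratio $w(\gamma_\lambda(\alpha))/w(\alpha)$ appearing in \eqref{eq:RN} as a power of $q^2$. Writing $\alpha=(e_1,\dots,e_N)$ through intermediate vertices $\mu^{(0)}=*,\mu^{(1)},\dots,\mu^{(N)}=\lambda$ and expanding $w(e_k)=q^{k|\mu^{(k-1)}|-(k-1)|\mu^{(k)}|}$, a telescoping of exponents gives
\[w(\alpha)=q^{\,2\sum_{k=1}^{N-1}|\mu^{(k)}|\,-\,(N-1)|\lambda|}.\]
For two paths $\alpha,\alpha'$ from $*$ to the same $\lambda$ with intermediate sizes $|\mu^{(k)}|$ and $|\nu^{(k)}|$ respectively, the $(N-1)|\lambda|$ terms cancel, and
\[\frac{w(\alpha')}{w(\alpha)}=q^{\,2\sum_{k=1}^{N-1}(|\nu^{(k)}|-|\mu^{(k)}|)}\in\{q^{2n}:n\in\mathbb{Z}\}.\]

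Consequently, $dP\circ\gamma/dP$ takes values only in $\{q^{2n}:n\in\mathbb{Z}\}$ for every $\gamma\in\mathfrak{S}$, and piecewise the same holds for every element of $[\mathfrak{S}]$ by the very definition of the full group. The defining condition of $r(\Omega,\mathcal{C},P,\mathfrak{S})$ requires $|dP\circ\gamma/dP-r|<\epsilon$ to hold on a set of positive measure for arbitrary $\epsilon>0$, so any admissible $r\in[0,\infty)$ must be an accumulation point of $\{q^{2n}:n\in\mathbb{Z}\}$ in $[0,\infty)$; the only such points are the $q^{2n}$ themselves together with $0$ (via $n\to+\infty$), which gives the desired containment.

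I do not expect a genuine obstacle here; the one place to be careful is the telescoping calculation that produces the factor $2$ in the exponent, which is exactly what encodes the $q^2$ (rather than $q$) appearing in the type--$\mathrm{I\hspace{-.1em}I\hspace{-.1em}I}_{q^2}$ conclusion of the main theorem and so deserves explicit verification.
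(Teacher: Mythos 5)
Your proof is correct and takes essentially the same route as the paper: identify the right-hand side of \eqref{eq:RN} with $d(P\circ\gamma)/dP$ by checking, via $q$-centrality, that both measures agree on cylinder sets and then invoking uniqueness of the extension (the paper cites Hopf's extension theorem where you use a $\pi$-system/monotone class argument), and your telescoping computation $w(\alpha)=q^{2\sum_{k=1}^{N-1}|r(\alpha_k)|-(N-1)|\lambda|}$ is exactly the identity recorded in the paper's Example. The only cosmetic difference is that you prove the ratio-set containment directly from the accumulation-point characterization, whereas the paper delegates this step to \cite[Lemma 2.3 (i)]{BrownDooleyLake}; both arguments are sound.
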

\begin{example}
If $\gamma\in\mathfrak{S}_N$ is a permutation of two finite paths $\alpha=(\alpha_1,\dots,\alpha_N)$ and $\beta=(\beta_1,\dots,\beta_N)$ starting from $*$ to $\lambda\in\mathrm{Sign}_N$, then the right-hand side is equal to
\begin{align*}
&\frac{w(\alpha)}{w(\beta)}1_{C_\beta}+\frac{w(\beta)}{w(\alpha)}1_{C_\alpha}+1_{\Omega\backslash(C_\alpha\cup C_\beta)}\\
&=\frac{q^{2(|r(\alpha_1)|+\cdots+|r(\alpha_{N-1})|)}}{q^{2(|r(\beta_1)|+\cdots+|r(\beta_{N-1})|)}}1_{C_\beta}+\frac{q^{2(|r(\beta_1)|+\cdots+|r(\beta_{N-1})|)}}{q^{2(|r(\alpha_1)|+\cdots+|r(\alpha_{N-1})|)}}1_{C_\alpha}+1_{\Omega\backslash(C_\alpha\cup C_\beta)}.
\end{align*}
\end{example}
\begin{proof}
Let $f$ be the measurable function on $(\Omega,\mathcal{C})$ defined by the right-hand side of Equation \eqref{eq:RN}. Then the probability measure $P'$ defined as $P'(A):=\int_Af(\omega)dP(\omega)$ clearly coincides with $P\circ \gamma$ on any cylinder sets. Thus, by Hopf's extension theorem, we have $P'=P\circ \gamma$ on any measurable set, that is, $f$ must be the Radon--Nikodym derivative $dP\circ \gamma/dP$. 

Therefore, we have that $dP\circ\gamma/dP(\Omega)\subseteq\{q^{2n}\mid n\in\mathbb{Z}\}\cup\{0\}$. Thus we conclude that $r(\Omega,\mathcal{C},P,\mathfrak{S})\subseteq\{q^{2n}\mid n\in\mathbb{Z}\}\cup\{0\}$ by \cite[Lemma 2.3 (i)]{BrownDooleyLake}.
\end{proof}

For every $\theta=(\theta_i)_{i=1}^\infty\in\mathcal{N}$ and $n\geq1$, we define $\lambda(n;\theta):=(\theta_n,\theta_{n-1},\dots,\theta_1)\in\mathrm{Sign}_n$ and $e^\theta_n:=[\lambda(n-1;\theta),\lambda(n;\theta)]\in E_n$.
\begin{lemma}\label{L5.1}
If $\theta\in\mathcal{N}$ is bounded, then $P^\theta(\{(e^\theta_n)_{n=1}^\infty\})>0$ and
\[P^\theta\left(\bigcup_{\gamma\in\mathfrak{S}}\{\gamma((e^\theta_n)_{n=1}^\infty)\}\right)=1.\]
\end{lemma}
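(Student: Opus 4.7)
The plan is to prove the two assertions in sequence: first that the singleton $\{(e^\theta_n)_{n=1}^\infty\}$ carries positive $P^\theta$-mass, and then to promote this to full measure on the whole $\mathfrak{S}$-orbit by invoking the $\mathfrak{S}$-ergodicity of $P^\theta$ recalled in the remarks above.

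For the first assertion, set $\alpha^N:=(e^\theta_1,\dots,e^\theta_N)$, so that the singleton equals $\bigcap_{N\geq 1}C_{\alpha^N}$ and, by continuity of measure,
\[P^\theta(\{(e^\theta_n)_{n=1}^\infty\})=\lim_{N\to\infty}P^\theta(C_{\alpha^N}).\]
Combining the $q$-central condition with \eqref{eq:parameter} expresses $P^\theta(C_{\alpha^N})$ as $w(\alpha^N)$ times the limit $\lim_{n\to\infty}\dim_q(\lambda(N;\theta),\lambda(n;\theta))/\dim_q(\lambda(n;\theta))$. Fix $N_0$ with $\theta_n=\theta_\infty$ for $n\geq N_0$. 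A direct computation of $w(e^\theta_n)=q^{n|\lambda(n-1;\theta)|-(n-1)|\lambda(n;\theta)|}$, exploiting $|\lambda(n;\theta)|-|\lambda(n-1;\theta)|=\theta_\infty$ for $n\geq N_0$, telescopes to show that $w(e^\theta_n)$ equals the constant $C:=q^{|\lambda(N_0-1;\theta)|-(N_0-1)\theta_\infty}\geq 1$ for all $n\geq N_0$; hence $w(\alpha^N)$ is either eventually constant (if $\theta$ is constant) or grows geometrically in $N$ (if $\theta$ is bounded but non-constant). Using the Weyl-type product formula \eqref{eq:weyl} and its $q$-analog for $\dim_q$ together with the fact that $\lambda(n;\theta)$ is obtained from $\lambda(N;\theta)$ by prepending $(n-N)$ copies of the single value $\theta_\infty$ at the top, I would show that the relative dimension ratio decays at precisely the reciprocal rate, so that $P^\theta(C_{\alpha^N})$ stabilizes in $N$ to a strictly positive limit.

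For the second assertion, the orbit $O:=\bigcup_{\gamma\in\mathfrak{S}}\{\gamma((e^\theta_n)_{n=1}^\infty)\}$ is tautologically $\mathfrak{S}$-invariant. Since $P^\theta$ is $\mathfrak{S}$-ergodic (extremity and $\mathfrak{S}$-ergodicity coincide for $q$-central probability measures, as recalled in the remarks of Section~\ref{Sec3}), we get $P^\theta(O)\in\{0,1\}$, and since $O\supset\{(e^\theta_n)_{n=1}^\infty\}$ has positive measure by the first part, $P^\theta(O)=1$.

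The main obstacle is the asymptotic cancellation in the first assertion: one needs sharp enough control of the relative quantum dimension $\dim_q(\lambda(N;\theta),\lambda(n;\theta))/\dim_q(\lambda(n;\theta))$ when $\lambda(n;\theta)$ differs from $\lambda(N;\theta)$ only by prepending a long block of copies of $\theta_\infty$, in order to match it precisely against the geometric factor $w(\alpha^N)\sim C^N$. I expect this explicit asymptotic comparison is the content of the technical proposition deferred to Appendix~\ref{app1}.
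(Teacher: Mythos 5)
Your reduction of the second assertion to the first via $\mathfrak{S}$-ergodicity is exactly what the paper does and is fine. The gap is in the first assertion: the entire difficulty is concentrated in the sentence ``I would show that the relative dimension ratio decays at precisely the reciprocal rate,'' and that step is never carried out. Your telescoping computation of $w(e^\theta_n)$ (it is eventually the constant $C=q^{\sum_{i<N_0}(\theta_i-\theta_\infty)}\geq 1$) is correct, but matching the geometric growth of $w(\alpha^N)$ against the decay of $\lim_{n}\dim_q(\lambda(N;\theta),\lambda(n;\theta))/\dim_q(\lambda(n;\theta))$ requires genuine asymptotic estimates on quantum dimensions that you have not supplied. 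Moreover, your expectation that this comparison is the content of Appendix \ref{app1} is mistaken: Proposition \ref{A1} concerns the \emph{unbounded} case (it partitions finite paths into chains whose weights differ by factors of $q^2$, feeding into the type I\hspace{-.1em}I\hspace{-.1em}I computation) and says nothing about the asymptotics you need. As written, the proof of $P^\theta(\{(e^\theta_n)_{n=1}^\infty\})>0$ is therefore incomplete.

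The paper avoids any asymptotic cancellation with a soft support argument. Fix $N$ with $\theta_n=a$ for $n\geq N$. If $\lambda(N;\theta)\prec\lambda\in\mathrm{Sign}_{N+1}$ and $P^\theta(C_{(\alpha^\theta_N,[\lambda(N;\theta),\lambda])})>0$, then by \eqref{eq:parameter} there must exist a path from $\lambda$ to $\lambda(n;\theta)$ for large $n$, which forces $\lambda_1\leq a$ and $\lambda_i\geq\theta_{N-i+2}$; combined with the interlacing $\lambda(N;\theta)\prec\lambda$ this pins down $\lambda=\lambda(N+1;\theta)$. Hence $P^\theta(C_{\alpha^\theta_{N+1}})=P^\theta(C_{\alpha^\theta_N})$, and recursively the cylinder measures are \emph{exactly} constant from level $N$ on, so the singleton has measure $P^\theta(C_{\alpha^\theta_N})$, which is strictly positive by \cite[Proposition 5.14]{Gorin:2012}. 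No comparison of growth rates is needed. If you wish to salvage your computational route you must actually prove the claimed reciprocal decay; the support argument is both shorter and more robust.
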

\begin{proof}
Since $P^\theta$ is $\mathfrak{S}$-ergodic, it suffices to show that $P^\theta(\{(e^\theta_n)_{n=1}^\infty\})>0$. For every $n\geq1$ let $\alpha^\theta_n:=(e^\theta_1,e^\theta_2,\dots,e^\theta_n)$. Remark that $P^\theta(C_{\alpha^\theta_n})>0$ by \cite[Proposition 5.14]{Gorin:2012}. Since $\theta$ is bounded, there exists $N\geq1$ and $a\in\mathbb{Z}$ such that $\theta_n=a$ for every $n\geq N$. We claim that if $\lambda(N;\theta)\prec\lambda\in\mathrm{Sign}_{N+1}$ and $P^\theta(C_{(\alpha^\theta_N,[\lambda(N;\theta),\lambda])})>0$, then $\lambda$ must coincide with $\lambda(N+1;\theta)$. Indeed, by Equation \eqref{eq:parameter}, there exists a path from $\lambda$ to $\lambda(n;\theta)$ for large $n>N$. Thus, we have $\lambda_1\leq\theta_{n}=a$ and $\lambda_i\geq\theta_{N-i+2}$ for $i=2,\dots,N+1$. Furthermore, we have $\lambda_1\geq\theta_N\geq\lambda_2\geq\cdots\geq\lambda_N\geq\theta_1\geq\nu_{N+1}$ since $\lambda(N;\theta)\prec\lambda$. Thus, $\lambda=\lambda(N+1;\theta)$. Therefore, we have $P^\theta(C_{\alpha^\theta_N})=P^\theta(C_{\alpha^\theta_{N+1}})$. By using this argument recursively,
\[P^\theta(\{(e^\theta_n)_{n=1}^\infty\})=P^\theta(\bigcap_{n\geq1}C_{\alpha^\theta_n})=\lim_{n\to\infty}P^\theta(C_{\alpha^\theta_n})=P^\theta(C_{\alpha^\theta_N})>0.\]
\end{proof}

\begin{proof}[{Proof of Theorem \ref{theorem:main2}}]
Firstly, we prove (I${}_1$)${}_\mathrm{OE}$. Let $\theta=(a,a,\dots)$. By Lemma \ref{L5.1}, it suffices to show that $\bigcup_{\gamma\in\mathfrak{S}}\{\gamma((e^\theta_n)_{n=1}^\infty)\}=\{(e^\theta_n)_{n=1}^\infty\}$. By Equation \eqref{eq:weyl}, we have $\dim((a,\dots,a))=1$, that is, $\gamma((e^\theta_n)_{n=1}^\infty)=(e^\theta_n)_{n=1}^\infty$ for every $\gamma\in\mathfrak{S}$.

Secondly, we prove (I${}_\infty$)${}_\mathrm{OE}$. Since $P^\theta$ is $\mathfrak{S}$-quasi-invariant, by Lemma \ref{L5.1}, it suffices to show that $\bigcup_{\gamma\in\mathfrak{S}}\{\gamma((e^\theta_n)_{n=1}^\infty)\}$ is an infinite set. Since $\theta$ is bounded and not constant, there exists $N\geq1$ and $a\in\mathbb{Z}$ such that $\theta_n=a$ for every $n>N$ and $\theta_N<a$. By Equation \eqref{eq:weyl}, we have
\begin{align*}
\dim(\lambda(n))
&=\prod_{1\geq i<j\geq n}\frac{(\lambda(n;\theta)_i-i)-(\lambda(n;\theta)_j-j)}{j-i}\\
&=\dim(\lambda(N))\prod_{i=1}^{n-N}\prod_{j=1}^N\frac{a-\theta_{N-j+1}+n-N+j-i}{n-N+j-i}.
\end{align*}
Thus, $\dim(\lambda(n))\to\infty$ as $n\to\infty$. Therefore, $\bigcup_{\gamma\in\mathfrak{S}}\{\gamma((e^\theta_n)_{n=1}^\infty)\}$ is an infinite set.

Finally, we prove (I\hspace{-.1em}I\hspace{-.1em}I${}_{q^2}$)${}_\mathrm{OE}$. It suffices to show that $r(\Omega,\mathcal{C},P^\theta,\mathfrak{S})=\{q^{2n}\mid n\in\mathbb{Z}\}\cup\{0\}$. By Lemma \ref{lemma:RN}, we have $r(\Omega,\mathcal{C},P^\theta,\mathfrak{S})\subseteq\{q^{2n}\mid n\in\mathbb{Z}\}\cup\{0\}$. Since it is known that $r(\Omega,\mathcal{C},P^\theta,\mathfrak{S})\backslash\{0\}$ are subgroups in $\mathbb{R}_{>0}$, it suffices to show that $q^2\in r(\Omega,\mathcal{C},P^\theta,\mathfrak{S})$. We fix a finite path $\alpha$ from $*$ to a vertex $\lambda\in\mathrm{Sign}_N$ such that $P^\theta(C_\alpha)>0$. Let $L>N$ be large enough and $A$ the subset of finite paths given as $A:=\{\alpha'=(e_n)_{n=N+1}^L\mid s(e_{N+1})=\lambda,\,P^\theta(C_{(\alpha,\alpha')})>0\}$. By Proposition \ref{A1}, there exists a partition $\mathcal{P}$ of $A$ such that every $\{\alpha'_1,\dots,\alpha'_m\}\in\mathcal{P}$ satisfies that $m\geq2$, $w(\alpha'_{i+1})=q^2w(\alpha'_i)$ for $i=1,\dots,m-1$ and $\alpha'_1,\dots,\alpha'_m$ terminate at same vertex in $\mathrm{Sign}_L$. Then let $\gamma\in[\mathfrak{S}]$ satisfy
\[\gamma((\alpha,\alpha'_i,\dots))=\begin{cases}(\alpha,\alpha'_{i+1},\dots)& i=1,\dots,m-1,\\(\alpha,\alpha'_1,\dots)&i=m\end{cases}\] 
for every $\{\alpha'_1,\dots,\alpha'_m\}\in\mathcal{P}$. By Lemma \ref{lemma:RN}, we have
\[\bigsqcup_{\{\alpha'_1,\dots,\alpha'_m\}\in\mathcal{P}}\bigsqcup_{i=1}^{m-1}C_{(\alpha,\alpha'_i)}=\left\{\omega\in C_\alpha\,\middle|\,\frac{dP\circ\gamma}{dP}(\omega)=q^2\right\}.\]
On the other hand, since $P^\theta$ is $q$-central,
\begin{align*}
P^\theta(C_\alpha)
&=\sum_{\{\alpha'_1,\dots,\alpha'_m\}\in\mathcal{P}}\sum_{i=1}^mP^\theta(C_{(\alpha,\alpha'_i)})\\
&=\sum_{\{\alpha'_1,\dots,\alpha'_m\}\in\mathcal{P}}(1+q^{-2}+\cdots+q^{-2(m-1)})P^\theta(C_{\alpha,\alpha'_m})\\
&>2\sum_{\{\alpha'_1,\dots,\alpha'_m\}\in\mathcal{P}}P^\theta(C_{\alpha,\alpha'_m}).
\end{align*}
Therefore, we have 
\begin{align*}
P^\theta\left(\left\{\omega\in C_\alpha\,\middle|\,\frac{dP\circ\gamma}{dP}(\omega)=q^2\right\}\right)
&=P(C_\alpha)-\sum_{\{\alpha'_1,\dots,\alpha'_m\}\in\mathcal{P}}P^\theta(C_{\alpha,\alpha'_m})\\
&>\frac{1}{2}P^\theta(C_\alpha),
\end{align*}
and hence we have $q^2\in r(\Omega,\mathcal{C},P^\theta,\mathfrak{S})$ by Lemma \ref{L4.2}.

\end{proof}

\begin{remark}
Cuenca \cite{Cuenca} introduced a $(q,t)$-analog of Vershik and Kerov's central probability measures, called $(q,t)$-central probability measures (where $(q,t)$ is the so-called Macdonald parameter). See also \cite[Appendix B]{Sato}. In order to define $(q,t)$-central probability measures, we replace the weight function $w$ defined in Section \ref{Sec3} with the new weight function $w_{q,t}\colon E\to\mathbb{R}_{>0}$ defined by $w_{q,t}([\nu,\lambda]):=\psi_{\lambda/\mu}(q,t^2)t^{N|\mu|-(N-1)|\lambda|}$ for $[\mu,\lambda]\in E_N$ (see \cite[Theorem 2.5]{Cuenca} for the definition of $\psi_{\lambda/\mu}$). 
%\[\psi_{\lambda/\mu}(q,t^2):=\prod_{1\leq i<j\leq N-1}\frac{f_{j-i}(\mu_i-\mu_j)f_{j-i}(\lambda_i-\lambda_{j+1})}{f_{j-i}(\lambda_i-\mu_j)f_{j-i}(\mu_i-\lambda_{j+1})},\]
%\[f_k(m):=\frac{(q^mt^{2(k+1)};q)_\infty}{(q^{m+1}t^{2k};q)_\infty}.\]
By \cite[Theorem 1.3]{Cuenca}, the extreme (i.e., $\mathfrak{S}$-ergodic by \cite[Appendix B]{Sato}) $(q,t)$-central probability measures are also parametrized by $\mathcal{N}$ if $t=q^{k}$ for some $k\in\mathbb{N}$. Thus, it is natural to ask for a $(q,t)$-analog of Theorem \ref{theorem:main2}. By \cite[Theorem 5.1, Proposition 6.11, Lemma 7.8 (1)]{Cuenca}, we can obtain Lemma \ref{L5.1} for extreme $(q,t)$-central probability measures. Thus, by the same proof of Theorem \ref{theorem:main2} (I${}_1$)${}_\mathrm{OE}$ and (I${}_\infty$)${}_\mathrm{OE}$, we have the following two results:
\begin{description}
\item[(I${}_1$)$_{q,t}$] If $\theta\in\mathcal{N}$ is a constant, then the corresponding extreme $(q,t)$-central probability measure is atomic with a single atom.
\item[(I${}_\infty$)$_{q,t}$] If $\theta\in\mathcal{N}$ is not constant but bounded, then the corresponding extreme $(q,t)$-central probability measure is atomic with an infinite atoms.
\end{description}
Moreover, Equation \eqref{eq:RN} holds true for $(q,t)$-central probability measures by replacing $w$ with $w_{q,t}$. However, the question of types in the case of unbounded parameters are still open because the values of Radon--Nikodym derivative are too complicated to compute Krieger--Araki--Woods ratio sets.
\end{remark}

%%%%%%%%%%%%%%%%%%%%%%%%%%%%%%%%%%%%%%%%%%%%%%%%%%%%%%%%%%%%%%%%%%%%%%%%%%%%%%%%%%%%%%%%%%%%%%%%%%%%%%%%%%%%%%%%
\appendix
\section{Partitions on finite paths}\label{app1}
We assume that $\theta\in\mathcal{N}$ is unbounded throughout this appendix. 

\begin{proposition}\label{A1}
Let $\lambda\in\mathrm{Sign}_N$ and $\alpha$ be a finite path from $*$ to $\lambda\in\mathrm{Sign}_N$ with $P^\theta(C_\alpha)>0$. For large enough $L(>N+1)$ we define the set of finite paths by
\[A:=\{\alpha'=(e_n)_{n=N+1}^L\mid s(e_{N+1})=\lambda,\,P^\theta(C_{(\alpha,\alpha')})>0\}.\] 
Then there exist a partition $\mathcal{P}$ on $A$ and a suitable ordering of every $p\in\mathcal{P}$ such that every $p=\{\alpha'_1,\dots,\alpha'_m\}$ satisfies $m\geq2$, $w(\alpha'_{i+1})=q^2w(\alpha'_i)$ for $i=1,\dots,m-1$ and $\alpha'_1,\dots,\alpha'_m$ terminate at same vertex in $\mathrm{Sign}_L$. 
\end{proposition}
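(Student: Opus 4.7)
The plan is to decompose $A$ by terminal vertex: for $\mu \in \mathrm{Sign}_L$ set $A_\mu := \{\alpha' \in A : r(e_L) = \mu\}$, so that $A = \bigsqcup_\mu A_\mu$. Since every block of the desired partition must lie inside a single $A_\mu$, it suffices to partition each $A_\mu$ independently. A direct telescoping computation with the weight function yields, for a path $\alpha'$ traversing $\lambda = \lambda^{(N)} \prec \lambda^{(N+1)} \prec \cdots \prec \lambda^{(L)} = \mu$,
\[
w(\alpha') = q^{(N+1)|\lambda| - (L-1)|\mu|} \cdot q^{2 E(\alpha')}, \qquad E(\alpha') := \sum_{n=N+1}^{L-1}|\lambda^{(n)}|.
\]
Hence on $A_\mu$ the weight depends only on the integer energy $E$, and two paths satisfy $w(\alpha'') = q^2 w(\alpha')$ iff $E(\alpha'') = E(\alpha') + 1$. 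The task thus reduces to partitioning, for each $\mu$, the integer multiset $\{E(\alpha') : \alpha' \in A_\mu\}$ into runs of consecutive integers of length at least $2$.

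Next, I would identify $A_\mu$ with the set of Gelfand--Tsetlin patterns between $\lambda$ and $\mu$ (further cut out by the $P^\theta$-positivity condition) and introduce elementary moves that shift a single interior entry $\lambda^{(n)}_i$ by $\pm 1$ whenever the interlacing inequalities allow. Each such move changes $E$ by exactly $\pm 1$, and the induced ``flip graph'' on $A_\mu$ is connected by a standard argument on the GT interlacing poset. Consequently the energy multiset is supported on a full integer interval $[E_{\min}, E_{\max}]$, and there are no ``gaps'' in the set of attained energy values.

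Given this, I would construct the partition by a greedy sweep from $E_{\min}$ upward: iteratively form chains of paths $(\alpha'_1, \ldots, \alpha'_m)$ of strictly increasing energies $E, E+1, \ldots, E+m-1$, each chain of length at least $2$, and remove them from $A_\mu$. The ordering in each block is then automatic from the energy, and the relation $w(\alpha'_{i+1}) = q^2 w(\alpha'_i)$ follows from the weight formula above. Because all paths in a block share the terminal vertex $\mu$ by construction, the third condition of the proposition is also met.

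The principal obstacle is to rule out leftover singletons: paths whose energy level is only reachable with multiplicity $1$ or that would be stranded after higher levels are consumed. This is precisely where the hypotheses on $\theta$ being unbounded and $L$ being sufficiently large enter. Unboundedness forces the support of $P^\theta$ at level $L$ to consist of signatures $\mu$ whose entries span a long range (inherited from the growing $\theta_n$'s), giving the GT patterns between $\lambda$ and $\mu$ enough interior slack so that the multiplicities $n_E := |\{\alpha' \in A_\mu : E(\alpha') = E\}|$ satisfy a Hall-type regularity condition along consecutive energies. The technical heart of the proof I expect to be an explicit combinatorial argument, possibly via an involution that merges any would-be singleton into an adjacent chain, certifying that the greedy algorithm always succeeds.
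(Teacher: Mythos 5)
Your setup is sound and matches the first steps of the paper's argument: decomposing $A$ by terminal vertex $\mu$, and the telescoping computation showing that on $A_\mu$ the weight is $q^{(N+1)|\lambda|-(L-1)|\mu|}q^{2E(\alpha')}$ with $E(\alpha')=\sum_{n=N+1}^{L-1}|\lambda^{(n)}|$, so that the ratio $q^2$ corresponds to an energy increment of $1$. However, the combinatorial heart is missing, and the reduction you propose is in fact insufficient. Knowing that the attained energies form a full interval $[E_{\min},E_{\max}]$ does not imply that the multiset of energies can be partitioned into runs of consecutive integers of length at least $2$: for instance, a multiplicity profile $1,3,1$ over three consecutive levels admits no such partition (the middle level needs three chains, each of which must borrow a path from an adjacent level, but only two are available). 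So a greedy sweep on the energy multiset can fail, and your appeal to an unspecified ``Hall-type regularity condition'' and a hypothetical merging involution is exactly the step that requires proof; you explicitly defer it.

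The paper closes this gap with a finer fibration of $A_\mu$ rather than working with energy multiplicities. Identifying a path with its tower of intermediate signatures, one fixes \emph{all} coordinates except the first coordinate $m_{n,1}$ of each intermediate level ($n=N+1,\dots,L-1$); within such a fiber the energy depends only on $\sum_n m_{n,1}$, and the fiber is order-isomorphic to $\mathbb{Y}_{k,l}(\nu)$, the set of Young diagrams in a $k\times l$ box containing a fixed $\nu$ with $\nu_l=0$, where $k=\mu_1-\lambda_1$ and $l=L-N-1$. A separate lemma (proved by induction on $l$ via the decomposition by the first part) partitions $\mathbb{Y}_{k,l}(\nu)$ into saturated chains of length at least $2$ under adding one box, which translates exactly into the required $q^2$-chains. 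Unboundedness of $\theta$ enters precisely here: choosing $L$ with $\theta_L>\lambda_1$ forces $\mu_1\geq\theta_L>\lambda_1$ for every $\mu$ in the support (by Gorin's estimate), i.e.\ $k\geq1$, which is what guarantees each fiber has at least two elements. To complete your proof you would need either this refined fibration or a genuine argument controlling the energy multiplicities; as written, the proposal does not establish the proposition.
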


Before we prove this proposition, we have to prepare some notations and a technical lemma. We denote by $\mathbb{Y}_{k,l}$ the set of Young diagrams in the rectangle with $k$ rows and $l$ columns, that is,
$\mathbb{Y}_{k,l}=\{(\lambda_1,\dots,\lambda_l)\in\mathbb{Z}^l\mid k\geq\lambda_1\geq\cdots\lambda_l\geq0\}$.
For every $\lambda\in\mathbb{Y}_{k,l}$ we define 
$\mathbb{Y}_{k,l}(\lambda):=\{\mu\in\mathbb{Y}_{k,l}\mid\lambda\subseteq\mu\}=\{\mu\in\mathbb{Y}_{k,l}\mid\lambda_i\leq\mu_i,i=1,\dots,l\}$.
For $\lambda,\mu\in\mathbb{Y}_{k,l}$ we write $\lambda\nearrow\mu$ if $\mu$ is obtained by adding one box to $\lambda$.

\begin{lemma}\label{L:A1}
For every $k,l\geq1$ and $\lambda\in\mathbb{Y}_{k,l}$ with $\lambda_l=0$ there exists a partition $\mathcal{P}$ on $\mathbb{Y}_{k,l}(\lambda)$ such that every $\{\lambda^{(1)},\dots,\lambda^{(m)}\}\in\mathcal{P}$ satisfies that $m\geq2$ and $\lambda^{(1)}\nearrow\lambda^{(2)}\nearrow\cdots\nearrow\lambda^{(m)}$.
\end{lemma}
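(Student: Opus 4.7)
The plan is to induct on $k+l$, slicing $\mathbb{Y}_{k,l}(\lambda)$ according to the first coordinate $\mu_1$. In the base cases $k=1$ or $l=1$, the set $\mathbb{Y}_{k,l}(\lambda)$ is itself a saturated chain in the Hasse diagram, and the hypothesis $\lambda_l=0<k$ guarantees at least two elements, so taking the whole set as the unique part of $\mathcal{P}$ suffices.

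For the inductive step with $k,l\geq 2$, write $\mathbb{Y}_{k,l}(\lambda) = \bigsqcup_{j=\lambda_1}^{k} W_j$ with $W_j := \{\mu\in\mathbb{Y}_{k,l}(\lambda) \mid \mu_1=j\}$. Via the map $(j,\mu_2,\dots,\mu_l)\mapsto(\mu_2,\dots,\mu_l)$ one identifies $W_j$ with $\mathbb{Y}_{j,l-1}((\lambda_2,\dots,\lambda_l))$; the last coordinate here still equals $\lambda_l=0$, and the inductive parameter $j+(l-1)$ is strictly smaller than $k+l$, so the inductive hypothesis applies to each $W_j$ with $j\geq 1$. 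A cover within $W_j$ remains a cover in the ambient $\mathbb{Y}_{k,l}(\lambda)$, so the chain decompositions of the $W_j$'s provided by induction glue into a decomposition of $\mathbb{Y}_{k,l}(\lambda)$, provided that no $W_j$ is a singleton.

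A direct check shows that $W_j$ is a singleton if and only if $(\lambda_2,\dots,\lambda_l)=(j,\dots,j)$, which together with $\lambda_l=0$ forces $j=0$ and hence $\lambda=(0,\dots,0)$. This unique exceptional case is resolved as follows: when $\lambda=0$ one has $W_0 = \{(0,\dots,0)\}$, and under the identification $W_1\cong\mathbb{Y}_{1,l-1}((0,\dots,0))$ the element $(1,0,\dots,0)\in W_1$ corresponds to the minimum element; the chain of the inductive partition of $W_1$ containing this minimum has it as its minimum endpoint (nothing can cover the minimum from below), so prepending the cover $(0,\dots,0)\nearrow(1,0,\dots,0)$ extends that chain so as to absorb $W_0$.

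The main point requiring care is the singleton case analysis. The role of the hypothesis $\lambda_l=0$ is precisely to rule out the constant patterns $(\lambda_2,\dots,\lambda_l)=(j,\dots,j)$ with $j\geq 1$; without this, further singleton pieces $W_j$ could appear and would not be as easy to absorb into neighboring chains.
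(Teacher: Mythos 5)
Your proof is correct and follows essentially the same route as the paper: slice $\mathbb{Y}_{k,l}(\lambda)$ by the first coordinate, identify each slice $W_j$ with $\mathbb{Y}_{j,l-1}((\lambda_2,\dots,\lambda_l))$, and induct. You are in fact more careful than the paper's one-line argument, which does not explicitly address the singleton slice $W_0$ arising when $\lambda=(0,\dots,0)$; your device of prepending the cover $(0,\dots,0)\nearrow(1,0,\dots,0)$ to the chain through the minimum of $W_1$ is precisely the fix needed there.
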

\begin{proof}
Let $\lambda'=(\lambda_2,\dots,\lambda_l)$. Since $\mathbb{Y}_{k,l}(\lambda)$ can be identified with 
\[\bigsqcup_{i=\lambda_1}^k\mathbb{Y}_{i,l-1}(\lambda')\cong\{[i,\mu]\mid i=\lambda_1,\dots,k,\,\mu\in\mathbb{Y}_{i,l-1}(\lambda')\}\] by the mapping $(\mu_1,\dots,\mu_l)\mapsto[\mu_1,(\mu_2\dots,\mu_l)]$, we can prove this lemma by induction on $l$.
\end{proof}

\begin{proof}[{Proof of Proposition \ref{A1}}]
Since $\theta$ is unbounded, there exists $L(>N+1)$ such that $\theta_L>\lambda_1$. For every $\mu\in\mathrm{Sign}_L$ we denote by $A_\mu$ the set of all finite paths from $\lambda$ to $\mu$. In order to prove this proposition, it suffices to show that if $A_\mu\neq\emptyset$ and $P^\theta(C_{(\alpha,\alpha')})>0$ for some $\alpha'\in A_\mu$ then there exists a partition $\mathcal{P}$ on $A_\mu$ such that every $\{\alpha'_1,\dots,\alpha'_m\}\in\mathcal{P}$ satisfies that $m\geq2$ and $w(\alpha'_{i+1})=q^2w(\alpha'_i)$ for $i=1,\dots,m-1$. Remark that if $P^\theta(C_{(\alpha,\alpha')})>0$ for some $\alpha'\in A_\mu$ then $P^\theta(C_{(\alpha,\alpha')})>0$ for every $\alpha'\in A_\mu$ since $P^\theta$ is $\mathfrak{S}$-quasi-invariant.

By \cite[Proposition 5.14]{Gorin:2012}, we have $\mu_1\geq\theta_L(>\lambda_1)$. Then we define
\[B:=\left\{(m_{n,i})_{\substack{n=N+1,\dots,L-1\\ i=1,\dots,n-1}}\,\middle|\, m_{n+1,1}\geq m_{n,1}\geq m_{n+1,2}\geq\cdots\geq m_{n,n-1}\geq m_{n+1,n}\right\}\]
and $\kappa\colon \alpha'\in A_\mu\mapsto(\kappa_1(\alpha'),\kappa_2(\alpha'))\in \mathbb{Y}_{\mu_1-\lambda_1,L-K-1}\times B$ by
\[\kappa_1(\alpha'):=(m_{L-1,1}-\lambda_1,\dots,m_{N+1,1}-\lambda_1),\]
\[\kappa_2(\alpha'):=(m_{n,i})_{\substack{n=N+1,\dots,L-1\\ i=2,\dots,n}},\]
where $\alpha'=(e_n)_{n=N+1}^L$ and $r(e_n)=(m_{n,1},\dots,m_{n,n})\in\mathrm{Sign}_n$. Remark that $\kappa$ is injective. Moreover, the partition $\mathcal{P}_0$ on $A_\mu$ is defined by $\{\kappa_2^{-1}(\{b\})\mid b\in B\}\backslash\{\emptyset\}$. We fix $p\in\mathcal{P}_0$ and let $\kappa_2(p)=\{(m_{n,i})_{n=N+1,\dots,L-, i=2,\dots,n}\}$. By the definition of $\kappa$, we have 
\[p\cong \mathbb{Y}_{\mu_1-\lambda_1,L-N-1}(\nu),\]
where $\nu=(\nu_1,\dots,\nu_{L-N-1})$ and $\nu_i=\max\{0,m_{L-i,2}-\lambda_1\}$ for $i=1,\dots,L-N-1$. Remark $\nu_{L-N-1}=0$ since $\mu_{N+1,2}\leq\lambda_1$. On the other hand, by the definition of $\mathcal{P}_0$, we observe that for any $\alpha'_1,\alpha'_2\in p$
\[\frac{w(\alpha'_2)}{w(\alpha'_1)}=\frac{q^{2(\kappa_1(\alpha'_2)_1+\cdots+\kappa_1(\alpha'_2)_{L-N-1})}}{q^{2(\kappa_1(\alpha'_1)_1+\cdots+\kappa_1(\alpha'_1)_{L-N-1})}},\]
where $\kappa_1(\alpha'_i)=(\kappa_1(\alpha'_i)_1,\dots,\kappa_1(\alpha'_i)_{L-N-1})$ for $i=1,2$. Therefore, by Lemma \ref{L:A1} and the above identification between $p$ and $\mathbb{Y}_{\mu_1-\lambda_1,L-N-1}(\nu)$, there exists a partition $\mathcal{P}|_p$ on $p$ such that every $\{\alpha'_1,\dots,\alpha'_m\}\in\mathcal{P}|_p$ satisfies that $m\geq2$ and $\sum_{l=1}^{L-N-1}(\kappa(\alpha'_{i+1})_l-\kappa(\alpha'_i)_l)=1$ for $i=1,\dots,m-1$. Then $\bigsqcup_{p\in\mathcal{P}_0}\mathcal{P}|_p$ is a required partition on $A_\mu$.
\end{proof}
}

\section*{Acknowledgment}
The author gratefully acknowledges the passionate guidance and continuous encouragement from his supervisor, Professor Yoshimichi Ueda. The author also thanks Professor Toshihiro Hamachi for letting the author know the work \cite{Yoshida}. Thanks to that, the author considered the type classification problem seriously. Moreover, the author really appreciates the referee for careful reading and useful comments.


\begin{thebibliography}{99}
\bibitem{Boyer83} R. P. Boyer, Infinite traces of AF-algebras and characters of $U(\infty)$, \emph{J. Operator Theory} \textbf{9} (1983), 205--236
\bibitem{BrownDooleyLake} G. Brown, A. H. Dooley, J. Lake, On the Krieger--Araki--Woods ratio set, \emph{T\^ohoku Math. J.} \textbf{47} (1995), 1--13
\bibitem{Cuenca} C. Cuenca, Asymptotic Formulas for Macdonald Polynomials and the Boundary of the $(q,t)$-Gelfand--Tsetlin graph, \emph{SIGMA} {\bf 14} (2018), No. 001, 66pp.
\bibitem{Durrett} R. Durrett, \emph{Probability: theory and examples}, Fourth edition, Cambridge Series in Statistical and Probabilistic Mathematics, 31. Cambridge University Press, 2010.
\bibitem{Gorin:2012} V. Gorin, The $q$-Gelfand-Tsetlin graph, Gibbs measures and $q$-Toeplitz matrices, \emph{Adv. Math} {\bf 229} (2012), no. 1, 201--266
%\bibitem{HKST} J. Heinonen, P. Koskela, N. Shanmugalingam, J. Tyson, \emph{Sobolev Spaces on Metric Measure Spaces: An Approach Based on Upper Gradients}, New Math. Mono. {\bf 27}, Cambridge Univ. Press, 2015.
\bibitem{Kerov:book} S. V. Kerov, \emph{Asymptotic Representation Theory of the Symmetric Group and its Applications in Analysis}, Trans. Math. Mono. {\bf 219}, Amer. Math. Soc., 2003.
\bibitem{Kosloff} Z. Kosloff, On a type I\hspace{-.1em}I\hspace{-.1em}I${}_1$ type Bernoulli shift, \emph{Ergod. Th.} \& \emph{Dynam. Sys.} \textbf{31} (2011), no. 6, 1727--1747
\bibitem{NeshveyevTuset} S. Neshveyev, L. Tuset, \emph{Compact Quantum Groups and Their Representation Categories}, Soc. Math. France., 2013.
\bibitem{NoumiYamadaMimachi} M. Noumi, H. Yamada, K. Mimachi, Finite dimensional representations of the quantum group $GL_q(n;\mathbb{C})$ and the zonal spherical functions on $U_q(n-1)\backslash U_q(n)$, \emph{Japan J. Math.} {\bf 19} (1993), no. 1, 31--80
\bibitem{Phelps} R. R. Phelps, \emph{Lectures on Choquet's Theorem. Second edition}, Lecture Notes in Mathematics, Springer-Verlag, Berlin, 2001.
\bibitem{Sato} R. Sato, Quantized Vershik--Kerov Theory and quantized central probability measures on branching graphs, \emph{J. Funct. Anal.}, to appear, arXiv:1804.02644
\bibitem{Tomatsu07} R. Tomatsu, A characterization of right coideals of quotient type and its application to classification of Poisson boundaries, \emph{Commun. Math. Phys.} \textbf{275} (2007), 271--296
\bibitem{Takesaki} M. Takesaki, \emph{Theory of Operator Algebras I, I\hspace{-.1em}I, I\hspace{-.1em}I\hspace{-.1em}I}, Encyclopedia of Mathematical Sciences {\bf 127}, Springer, 2003.
\bibitem{VershikKerov2} A. M. Vershik, S. V. Kerov, Characters and factor representations of the infinite unitary group, \emph{Dokl. Akad. Nauk. SSSR} {\bf 267} (1982), no. 2, 272--276
\bibitem{Voiculescu76} D. Voiculescu, Repr\'esentations factorielles de type I\hspace{-.1em}I de $U(\infty)$, \emph{J. Math. Pures Appl.} \textbf{55} (1976), 1--20
\bibitem{Yoshida} M. Yoshida, Odometer action on Riesz product, \emph{J. Austral. Math. Soc. (Series A)} \textbf{61} (1996), 141--149
\bibitem{Zelobenko} D. P. \u{Z}elobenko, \emph{Compact Lie groups and their representations}, Translations of Mathematical Monographs 40, Amer. Math. Soc., 1973.
\end{thebibliography}
\end{document}